\let\@cleartopmattertags\relax
\newcommand\articleend
  \let\authors\@empty
  \let\contribs\@empty
  \let\xcontribs\@empty
  \let\toccontribs\@empty
  \let\addresses\@empty
  \let\thankses\@empty
  \let\dedicatory\@empty
\let\@wraptoccontribs\wraptoccontribs
\newtheorem{theorem}{Theorem}[section]
\newtheorem{lemma}[theorem]{Lemma}
\newtheorem{prop}[theorem]{Proposition}
\newtheorem{corollary}[theorem]{Corollary}
\theoremstyle{definition}
\newtheorem{definition}[theorem]{Definition}
\newtheorem{example}[theorem]{Example}
\theoremstyle{remark}
\newtheorem{remark}[theorem]{Remark}
\numberwithin{equation}{section}
\newcommand{\F}{\mathbb F}
\newcommand{\Z}{\mathbb Z}
\newcommand{\Q}{\mathbb Q}
\newcommand{\R}{\mathbb R}
\renewcommand{\a}{\mathbf{a}}
\renewcommand{\b}{\mathbf{b}}
\renewcommand{\c}{\mathbf{c}}
\renewcommand{\phi}{\varphi}
\def\epsilon{\varepsilon}
\def\kappa{\varkappa}
\renewcommand{\pod}[1]{\if@display\mkern10mu\else\mkern6mu\fi(#1)}
\renewcommand{\pmod}[1]{\pod{{\operator@font mod}\mkern6mu#1}}
\newcounter{ineqone}
\newcommand{\ione}{\left[\arabic{ineqone}\right] \addtocounter{ineqone}{1}}
\begin{document}
\allowdisplaybreaks

\title{Groups of points on abelian threefolds over finite fields}

\author{Yulia Kotelnikova}

\address{Institute for Information Transmission Problems (RAS), Moscow, Russia}
\address{National Research University “Higher School of Economics”, Moscow, Russia}




\email{yuliakotel@gmail.com}

\keywords{abelian variety, the group of rational points, finite field, Littlewood-Richardson rule}


%
%

\begin{abstract}
In this paper we provide an algorithm to classify groups of points on abelian threefolds over finite fields.
The classification is given in terms of the Weil polynomial of abelian varieties in a given $\mathbb{F}_q$-isogeny class. This work completes  partial classification given in \cite{Ry15}.
\end{abstract}

\maketitle

\section{Introduction}

Famous results of Tsfasman \cite{tsf} and Xing \cite{xi1}, \cite{xi2} concerning groups of $\F_q$--points on elliptic curves and abelian surfaces  were elegantly generalized by Rybakov in \cite{Ry10} and \cite{Ry12}. This paper is devoted to  classification of groups of points on abelian varieties of dimension $3$. The classification was started  in \cite{Ry15}. In this section we site some of Rybakov's notable theorems and  give a precise problem statement.

Suppose $X$ is an abelian variety of dimension $g$ over a finite field  $\F_q$ with $q=p^r$. Then $X(\F_q)$ is a finite abelian group. Thus there is a decomposition $ X(\F_q)=\oplus_lX(\F_q)_l$
into a direct sum of its $l$-primary components. 

We denote by $\operatorname{T}_l(X)=\underleftarrow{\lim}\ker(X\xrightarrow{\:\cdot\, l^m}X)(\overline\F_q)$ the  \emph{Tate module} of $X$. In case $l\ne p$ it  is a $\Z_l$ module of rank $2g$.  We denote by $\operatorname{Fr}_X$ the Frobenius map acting on $\operatorname{T}_l(X)$.  The characteristic polynomial $f_X(t)$ of  $\operatorname{Fr}_X$ is a \emph{q-Weil polynomial}, i.\,e. $f_X(t)$ is monic, defined over $\Z$ and absolute values of its roots are all equal to~$\sqrt{q}$. 

The following statement holds:
\begin{prop}[\cite{Ry10}]
The group of points $X(\F_q)_l$ is isomorphic to $\operatorname{coker}(1-\operatorname{Fr}_X)$.
\end{prop}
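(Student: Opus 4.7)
The plan is to apply the snake lemma to the short exact sequence
\[
0 \to \operatorname{T}_l(X) \to \operatorname{V}_l(X) \to \operatorname{V}_l(X)/\operatorname{T}_l(X) \to 0,
\]
where $\operatorname{V}_l(X) := \operatorname{T}_l(X) \otimes_{\Z_l} \Q_l$, with the endomorphism $1 - \operatorname{Fr}_X$ acting vertically on each term. The identification that makes the statement work is the standard one: the quotient $\operatorname{V}_l(X)/\operatorname{T}_l(X)$ is canonically isomorphic to $X(\overline{\F}_q)[l^\infty]$, the $l$-primary torsion subgroup of the geometric points. Under this identification, taking $\operatorname{Fr}_X$-invariants on the quotient exactly picks out the $l$-primary component of the $\F_q$-rational points, i.e.\ $\ker\bigl(1-\operatorname{Fr}_X \,\bigm|\, \operatorname{V}_l(X)/\operatorname{T}_l(X)\bigr) \cong X(\F_q)_l$.

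The key observation making the snake lemma useful is that $1 - \operatorname{Fr}_X$ is an automorphism of the $\Q_l$-vector space $\operatorname{V}_l(X)$. This follows because the eigenvalues of $\operatorname{Fr}_X$ are $q$-Weil numbers of absolute value $\sqrt{q} \neq 1$, so $1$ is not a root of the characteristic polynomial $f_X(t)$, hence $\det(1 - \operatorname{Fr}_X) = f_X(1) \neq 0$. Consequently both the kernel and cokernel of $1 - \operatorname{Fr}_X$ on $\operatorname{V}_l(X)$ vanish, and the snake lemma sequence collapses to the desired isomorphism
\[
\operatorname{coker}\bigl(1-\operatorname{Fr}_X \,\bigm|\, \operatorname{T}_l(X)\bigr) \;\cong\; \ker\bigl(1-\operatorname{Fr}_X \,\bigm|\, \operatorname{V}_l(X)/\operatorname{T}_l(X)\bigr) \;\cong\; X(\F_q)_l.
\]

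In summary, the steps in order are: (i) set up the three-term exact sequence above; (ii) identify $\operatorname{V}_l(X)/\operatorname{T}_l(X)$ with $X(\overline{\F}_q)[l^\infty]$ and check that $1-\operatorname{Fr}_X$ corresponds to the appropriate map (so that its kernel computes $X(\F_q)_l$); (iii) invoke the Weil bound to conclude $1-\operatorname{Fr}_X$ is invertible on $\operatorname{V}_l(X)$; (iv) apply the snake lemma. The only delicate point is step (ii), which is the bookkeeping identifying the geometric $l$-torsion of $X$ with $\operatorname{V}_l(X)/\operatorname{T}_l(X)$ in a $\operatorname{Fr}_X$-equivariant way; the rest is formal. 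Note that the argument is uniform for $l \neq p$, and for $l = p$ one would need to replace $\operatorname{T}_l(X)$ by the appropriate Dieudonné-theoretic analogue, but the statement as given concerns the $l\neq p$ case implicit in the Tate module setup.
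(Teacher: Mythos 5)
Your argument is correct and is essentially the standard proof from the cited reference \cite{Ry10}: the paper itself states this proposition without proof, and the snake-lemma argument applied to $0\to \operatorname{T}_l(X)\to \operatorname{V}_l(X)\to \operatorname{V}_l(X)/\operatorname{T}_l(X)\to 0$, together with the invertibility of $1-\operatorname{Fr}_X$ on $\operatorname{V}_l(X)$ coming from $f_X(1)\neq 0$, is exactly how Rybakov establishes it. Your caveat about $l=p$ is also appropriate, since the paper's Tate-module setup is only stated to have rank $2g$ when $l\neq p$.
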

We call the exponents of the group  $\operatorname{coker}(1-\operatorname{Fr}_X)$ Smith's invariants of the operator $(1-\operatorname{Fr}_X)$ as well as of a $\Z_l[1-\operatorname{Fr}_X]$--module $\operatorname{T}_l(X)$ (see section \ref{horn} for a thorough definition).

To state the results by Rybakov concerning groups of points on ordinary varieties it is convenient to introduce the notion of Newton polygon and Hodge polygon. Both of them are thought of as subsets of plane with coordinates $(x,y)$.
Suppose $G$ is an $l$-group isomorphic to a direct sum
$$G=\Z/l^{c_1}\oplus\Z/l^{c_2}\oplus\ldots\oplus \Z/l^{c_d}$$
where $c_1\ge c_2\ge\ldots\ge c_d\ge 0$. We consider the set of  points $(i,\sum_{j=d-i}^{d}c_j)$ and  define the Hodge polygon $\operatorname{Hp}_l(G,d)$  of this group to be the lower boundary of its convex hull.

Let $P(t)\in \Z_l[t]$ be a monic polynomial which over $\overline{\Q_l}$ decomposes into a product
$$P(t)=(t-\alpha_1)(t-\alpha_2)\ldots(t-\alpha_d),$$
and suppose $v_l(\alpha_1)\ge v_l(\alpha_2)\ldots \ge v_l(\alpha_d)\ge 0$. We consider the set of  points $(i,\sum_{j=d-i}^{d}v_l(\alpha_j))$ and  define the Newton polygon $\operatorname{Np}(P(t))$ of this polynomial to be the lower boundary of  its convex hull. Alternatively, if 
$$P(t)=t^d+f_{1}t^{d-1}+\ldots+f_d,$$
then $\operatorname{Np}(P(t))$ is the lower boundary of the convex hull of the set $(i,v_l(f_i))$.

Now we can state two beautiful theorems:
\begin{theorem}[\cite{Ry10}]\label{maintheo}
 $\operatorname{Np}(f_X(1-t))$   lies on or above  $\operatorname{Hp}(A(\F_q)_l, 2g)$  and their endpoints coincide. 

Explicitly, if 
$$\begin{array}{ll}A(\F_q)_l=\Z/l^{c_1}\oplus\Z/l^{c_2}\oplus\ldots \oplus\Z/l^{c_{2g}},&c_1\ge c_2\ge\ldots\ge c_{2g}\ge 0;\\
f(1-t)=(t-\alpha_1)(t-\alpha_2)\ldots(t-\alpha_{2g}),\qquad&v_l(\alpha_1)\ge v_l(\alpha_2)\ldots \ge v_l(\alpha_{2g})\ge 0,\end{array}$$
then the following inequalities hold
\begin{equation}\begin{array}{l}\label{up}
v_l(\alpha_1)\ge c_1;\\
v_l(\alpha_1)+v_l(\alpha_2)\ge c_1+c_2;\\
\ldots\\
v_l(\alpha_1)+v_l(\alpha_2)+\ldots+v_l(\alpha_{2g})= c_1+c_2+\ldots+c_{2g}.
\end{array}\end{equation}
\end{theorem}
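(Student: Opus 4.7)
The plan is to view the $\alpha_i$ as the eigenvalues of $A := 1-\operatorname{Fr}_X$ acting on the free $\Z_l$-module $T_l(X)$, and the $c_i$ as the exponents of the elementary divisors of $A$; both data sets are intrinsically attached to $A$ via the Proposition, which identifies $X(\F_q)_l$ with $\operatorname{coker}(A)$. The theorem then becomes a standard comparison of the Newton and Hodge polygons for an endomorphism of a free $\Z_l$-module.

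The endpoint equality is essentially immediate: $\det A = \prod_i \alpha_i$ up to sign yields $\sum_i v_l(\alpha_i) = v_l(\det A)$, while the Smith normal form of $A$ gives $v_l(\det A) = v_l(|\operatorname{coker} A|) = \sum_i c_i$. For the remaining inequalities the plan is to use the classical coefficients-versus-minors estimate. Writing $f_X(1-t) = t^d + f_1 t^{d-1} + \cdots + f_d$ with $d = 2g$, the coefficient $f_k$ equals, up to sign, the sum of the principal $k \times k$ minors of $A$, so it is divisible by the $k$-th determinantal divisor $I_k$, i.e.\ the g.c.d.\ of all $k\times k$ minors of $A$. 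This yields $v_l(f_k) \geq v_l(I_k)$, and a direct computation with the Smith form identifies $v_l(I_k) = c_{d-k+1}+\cdots+c_d$, which is exactly the $y$-coordinate of the Hodge polygon at $x = k$.

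The final step is a convexity argument. The Hodge polygon is convex piecewise linear, because its successive slopes $c_d \le c_{d-1} \le \cdots \le c_1$ are nondecreasing; the previous paragraph shows that every point $(k, v_l(f_k))$ entering the definition of the Newton polygon lies on or above the Hodge polygon. Since the Newton polygon is by construction the largest convex function satisfying $y(k) \le v_l(f_k)$ at every $k$, it follows that it lies on or above the Hodge polygon, and the two polygons share endpoints by the first paragraph. The inequalities of the theorem are then read off as the corresponding comparisons of partial sums of slopes. The one subtle point, and the main thing to handle carefully, is orientation: one must verify that the partial sums $\sum_{i=1}^k v_l(\alpha_i)$ and $\sum_{i=1}^k c_i$ in the statement, under the decreasing orderings $v_l(\alpha_1) \ge v_l(\alpha_2) \ge \cdots$ and $c_1 \ge c_2 \ge \cdots$, correspond to the correct positions on the two polygons so that ``Newton on or above Hodge'' translates into the displayed chain of inequalities.
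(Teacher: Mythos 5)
The paper does not actually prove Theorem \ref{maintheo}; it is imported from \cite{Ry10}, so there is no in-text argument to compare yours against. Your route is the standard one, and most of it is sound: the endpoint identity $\sum_i v_l(\alpha_i)=v_l(\det(1-\operatorname{Fr}_X))=\sum_i c_i$; the bound $v_l(f_k)\ge v_l(I_k)$ because $f_k$ is a signed sum of principal $k\times k$ minors and hence divisible by the $k$-th determinantal divisor $I_k$; the identification of $v_l(I_k)=c_{2g-k+1}+\dots+c_{2g}$ with the height of the Hodge polygon at $x=k$; and the convexity argument showing that the Newton polygon, being the largest convex minorant of the points $(k,v_l(f_k))$ anchored at $(0,0)$, lies on or above the convex Hodge polygon. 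This establishes the geometric assertion of the theorem.

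The gap is precisely the "orientation" step you flag and postpone, and it cannot be closed in the direction you expect. With both sequences sorted \emph{decreasingly}, as in the statement, the heights of the two polygons at $x=k$ are the sums of the $k$ \emph{smallest} valuations and the $k$ \emph{smallest} exponents, so ``Newton on or above Hodge'' reads $\sum_{j=2g-k+1}^{2g}v_l(\alpha_j)\ge\sum_{j=2g-k+1}^{2g}c_j$; combined with the endpoint equality this yields $\sum_{i=1}^{k}v_l(\alpha_i)\le\sum_{i=1}^{k}c_i$, the \emph{reverse} of the displayed chain \eqref{up}. The display as printed is in fact false: for an ordinary elliptic curve over $\F_7$ with $9$ points and cyclic group of points (both $\Z/9$ and $(\Z/3)^2$ occur in the isogeny class $a=-1$), at $l=3$ one has $v_3(\alpha_1)=v_3(\alpha_2)=1$ while $c_1=2$, so $v_l(\alpha_1)\ge c_1$ fails. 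Your proof therefore establishes the correct geometric half of the statement, but the final ``read off the inequalities'' step must conclude that \eqref{up} is mis-oriented (it becomes correct if the $c_i$ and the $v_l(\alpha_i)$ are listed in increasing order); note the same reversal propagates to Theorem \ref{groupsum} and to condition (2) in the theorems of Section \ref{mainresult}, so it should be recorded explicitly rather than left as an unchecked bookkeeping point.
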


\begin{theorem}[\cite{Ry10}]\label{maintheo2}
Suppose $f_X(t)$  is separable  and given an $l$-group $H$ such that\\   $\operatorname{Np}(f_X(1-t))$   lies on or above  $\operatorname{Hp}(H, 2g)$  and their endpoints coincide. 
 Than there exists an abelian variety $\tilde X$ isogenous to~$X$ such that $\tilde X(\F_q)_l\cong H$. 
Equivalently, suppose $f_X(t)$  is separable  and  suppose
$$H=\Z/l^{c_1}\oplus\Z/l^{c_2}\oplus\ldots \oplus\Z/l^{c_{2g}},\qquad c_1\ge c_2\ge\ldots\ge c_{2g}\ge 0,$$
so that  inequalities \ref{up} hold. Than there exists an abelian variety $\tilde X$ isogenous to~$X$ such that $\tilde X(\F_q)_l\cong H$. 
\end{theorem}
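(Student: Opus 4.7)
The plan is to reduce the statement to a lattice-theoretic question and then construct the required lattice explicitly. By Tate's theorem on isogenies of abelian varieties over finite fields, for $l\ne p$ the $\F_q$-isogeny class of $X$ is in bijection with the set of $\Z_l[\operatorname{Fr}_X]$-stable $\Z_l$-lattices inside the rational Tate module $V_l(X) := \operatorname{T}_l(X)\otimes_{\Z_l}\Q_l$; every such lattice is realized as $\operatorname{T}_l(\tilde X)$ for some $\tilde X$ isogenous to $X$. Combined with the Proposition from the introduction, it suffices to produce a $\Z_l[\operatorname{Fr}_X]$-stable lattice $T\subset V_l(X)$ on which the operator $\phi := 1-\operatorname{Fr}_X$ has Smith invariants $(l^{c_1},\ldots,l^{c_{2g}})$.

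Separability of $f_X$ is what makes this feasible. It implies that $\Q_l[\operatorname{Fr}_X]$ is an étale $\Q_l$-algebra $\prod_i K_i$ and $V_l(X)$ splits correspondingly as a direct sum of one-dimensional $K_i$-vector spaces; after passing to a splitting extension, $\operatorname{Fr}_X$ becomes diagonalizable with eigenvalues $1-\alpha_j$. Since the $v_l(\alpha_j)$ are exactly the Newton slopes of $f_X(1-t)$, the problem becomes the following purely local question: given a semisimple $\Q_l$-linear operator $\phi$ with prescribed Newton polygon, realize every Hodge polygon lying on or below $\operatorname{Np}(f_X(1-t))$ with the same endpoints as the invariant-factor polygon of some $\phi$-stable $\Z_l$-lattice.

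For this I would build the lattice inductively, starting from the eigenlattice $T_0 = \bigoplus_j \Z_l\cdot e_j$ (whose Hodge polygon already equals $\operatorname{Np}$) and then applying a sequence of elementary modifications — each replacing a basis vector $e_j$ by $e_j + l^k e_i$ for a carefully chosen pair $(i,j)$ and exponent $k$ — that lower the Hodge polygon by a single unit box at a time, while preserving $\Z_l[\operatorname{Fr}_X]$-stability. This is the combinatorial core alluded to by the paper's keywords: the set of Hodge polygons realized by Frobenius-stable lattices with fixed rational Frobenius is governed by a Littlewood--Richardson type rule, and the Newton-above-Hodge condition with coinciding endpoints is exactly what ensures that a chain of admissible moves can reach any target $(c_1,\ldots,c_{2g})$.

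The main obstacle is maintaining $\Z_l[\operatorname{Fr}_X]$-stability (not merely $\phi$-stability) throughout the sequence of moves. Modifications supported inside a single factor $K_i$ are automatic, but moves that mix distinct factors require a delicate patching. Here the separability assumption is essential: the orthogonal idempotents of $\prod_i K_i$ permit one to adjust the local components independently and glue them back into a globally Frobenius-stable lattice, after which Tate's equivalence returns the desired $\tilde X$ with $\tilde X(\F_q)_l\cong H$.
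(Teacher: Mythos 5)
The paper itself contains no proof of Theorem \ref{maintheo2}: it is imported verbatim from \cite{Ry10}, so there is nothing internal to compare against, and your proposal must stand on its own. Your opening reduction is the correct and standard one: every $\operatorname{Fr}_X$-stable $\Z_l$-lattice in $V_l(X)=\operatorname{T}_l(X)\otimes\Q_l$ arises as $\operatorname{T}_l(\tilde X)$ for some $\tilde X$ isogenous to $X$, and by the Proposition in the introduction it then suffices to realize $(c_1,\dots,c_{2g})$ as the Smith invariants of $1-\operatorname{Fr}_X$ on such a lattice. (One remark: your ``main obstacle'' of preserving $\Z_l[\operatorname{Fr}_X]$-stability rather than mere $\phi$-stability is vacuous, since $\Z_l[\operatorname{Fr}_X]=\Z_l[1-\operatorname{Fr}_X]=\Z_l[\phi]$; the two stability conditions are identical.)

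The genuine gap is in the construction, which is where all the content of the theorem lives. First, your starting object $T_0=\bigoplus_j\Z_l e_j$ does not exist inside $V_l(X)$ unless every factor $K_i$ of the \'etale algebra $\Q_l[\operatorname{Fr}_X]$ is $\Q_l$ itself: the eigenvectors live only after base change to a splitting field, and when some $K_i$ is ramified or inert the Newton polygon has non-integral vertices, so \emph{no} stable lattice has Hodge polygon equal to $\operatorname{Np}(f_X(1-t))$; the natural substitute is the maximal order $\bigoplus_i\mathcal O_{K_i}$, whose cokernel must then be computed, not read off from the slopes. Second, and more seriously, the assertion that a chain of moves $e_j\mapsto e_j+l^ke_i$ reaches every admissible target is precisely the statement to be proved, and you give no argument for it. It is not automatic: whether such a cross-factor modification yields a Frobenius-stable lattice is governed by the valuations $v_l(\alpha_i-\alpha_j)$ of differences of eigenvalues (e.g.\ in $\Q_l\times\Q_l$ the lattice $\langle e_1+e_2,\,l^ke_2\rangle$ is stable only for $k\le v_l(\alpha_1-\alpha_2)$), and these valuations are not controlled by the Newton polygon. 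A correct proof must therefore produce the lattice by a route that avoids this dependence --- Rybakov's argument does so by an explicit induction inside $\Q_l[t]/f_X(1-t)$, peeling off one invariant factor at a time, with the cyclic lattice $\Z_l[t]/f_X(1-t)$ (cokernel $\Z_l/f_X(1)$) and the maximal order as the two extremes. Finally, the appeal to a Littlewood--Richardson rule is misplaced for this theorem; in the present paper that machinery enters only for the non-separable cases, not here.
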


The case of nonseparable $f_X$ (equivalently, of $X$ having noncommutative endomorphism algebra) turned up to  be tricky. Tsfasman described groups of points for  $\dim X=1$ in \cite{tsf}. The classification for $\dim X=1,\,2$ and partially in case $\dim X=3$ was obtained by Rybakov in  \cite{Ry10}, \cite{Ry12}, and \cite{Ry15}. In this work we are going to apply the following result:

\begin{theorem}[\cite{Ry12}; \cite{Ry15}]\label{groupsum}
Suppose $f_X(1-t)=P^2(t)$, $\deg P(t)=2$ and $P(t)$ is separable. Then the group $X(\F_q)_l$ is decomposable into a direct sum of two groups
$$X(\F_q)_l=H_1\oplus H_2$$ each having at most two generators such that $\operatorname{Np}(P(1-t))$ lies on or above  $\operatorname{Hp}(H_i, 2)$ and their right borders coincide for $i=1,2$.
Explicitly, suppose
$$\begin{array}{ll}
A(\F_q)_l=\Z/l^{c_1}\oplus\Z/l^{c_2}\oplus\Z/l^{c_3}\oplus\Z/l^{c_4},\qquad& c_1\ge\ldots\ge c_4\ge 0,\\
P(t)=(t-\alpha_1)(t-\alpha_2),\qquad&v_l(\alpha_1)\ge v_l(\alpha_{2})\ge 0,\end{array}
$$
then
$$\begin{array}{l}
v_l(\alpha_1)\ge c_1\ge c_2,\\
c_1+c_4=c_2+c_3=v_l(\alpha_1)+v_l(\alpha_2).\qquad
\end{array}
$$
\end{theorem}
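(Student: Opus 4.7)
The plan is to reduce the four-dimensional problem to two parallel two-dimensional problems by splitting the Tate module along the factorization of the Weil polynomial. Since $f_X(1-t)=P(t)^2$ with $P$ separable of degree $2$, the Weil polynomial itself is $Q(t)^2$ for the separable quadratic Weil polynomial $Q(t):=P(1-t)$. By Honda--Tate the variety $X$ is $\F_q$-isogenous to $E\times E$ for an elliptic curve $E$ with Weil polynomial $Q$. The Tate module $T:=\operatorname{T}_l(X)$ is free of $\Z_l$-rank $4$, and since $V:=T\otimes\Q_l$ is isomorphic as a $\Q_l[\operatorname{Fr}]$-module to $\operatorname{T}_l(E)^2\otimes\Q_l$ (on which $\operatorname{Fr}$ has minimal polynomial $Q$, as $\dim E=1$ and $Q$ is separable), the minimal polynomial of $\operatorname{Fr}$ on $T$ is also $Q$. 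Thus $T$ is a faithful module of $R$-rank $2$ over the quadratic $\Z_l$-order $R:=\Z_l[t]/Q(t)$, and $A:=1-\operatorname{Fr}$ acts on $T$ as multiplication by the image $\sigma\in R$ of $1-t$, whose reduced norm equals $\alpha_1\alpha_2=P(0)$.

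The central step is to establish an $R$-module direct sum decomposition $T=N_1\oplus N_2$ with each $N_i$ of $R$-rank $1$, equivalently $\Z_l$-rank $2$. Granted this, each $N_i$ is $A$-stable with characteristic polynomial $P(t)$, and the identification $X(\F_q)_l\cong\operatorname{coker}(1-\operatorname{Fr}_X)$ yields
\[X(\F_q)_l=H_1\oplus H_2,\qquad H_i:=N_i/\sigma N_i.\]
Each $H_i$ is the cokernel of an endomorphism of a rank-two $\Z_l$-module, so has at most two generators and order $l^{v_l(\alpha_1)+v_l(\alpha_2)}$; hence the right endpoint of $\operatorname{Hp}(H_i,2)$ agrees with the right endpoint of $\operatorname{Np}(P(t))$. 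Applying Theorem~\ref{maintheo} in the rank-two case to $N_i$ gives the Newton--Hodge comparison together with the pointwise bound $v_l(\alpha_1)\ge a_i$, where $(a_i,b_i)$ are the invariants of $H_i$ in decreasing order with $a_i+b_i=v_l(\alpha_1)+v_l(\alpha_2)$. Sorting the multiset $\{a_1,b_1,a_2,b_2\}$ into a decreasing sequence $c_1\ge c_2\ge c_3\ge c_4$, the balance $c_1+c_4=c_2+c_3=v_l(\alpha_1)+v_l(\alpha_2)$ is automatic from the equal sums $a_1+b_1=a_2+b_2$, and $v_l(\alpha_1)\ge c_1\ge c_2$ follows from the individual bounds on the $H_i$.

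The main obstacle is the integral splitting $T=N_1\oplus N_2$. When the reduction of $Q$ modulo $l$ has two distinct roots, the order $R$ is maximal---isomorphic to $\Z_l\times\Z_l$ or to the ring of integers of the unramified quadratic extension of $\Q_l$---and $T$ is automatically free of rank two over $R$, so the splitting is immediate. The delicate case is when $Q$ has a double root modulo $l$, making $R$ a non-maximal local $\Z_l$-order. Here I would exploit that $R=\Z_l[t]/Q(t)$ is a one-dimensional complete intersection, hence Gorenstein and in particular a Bass order; by the Bass--Drozd theorem on Bass orders, every finitely generated torsion-free $R$-module decomposes as a direct sum of fractional $R$-ideals, supplying a decomposition $T=I_1\oplus I_2$. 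Verifying this structure in the specific setting and tracking how the ideal class of each $I_i$ influences the Smith invariants of $H_i$ is the technical heart of the argument; once the lattice decomposition is in hand, the Newton--Hodge bookkeeping is routine.
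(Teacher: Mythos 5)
Your proposal is correct and follows essentially the same route as the paper: both arguments hinge on decomposing $\operatorname{T}_l(X)$ as a direct sum of two rank-one lattices over the quadratic order $\Z_l[1-\operatorname{Fr}_X]$ and then applying the rank-two Newton--Hodge theorems (Theorems \ref{maintheo} and \ref{maintheo2}) to each summand before sorting the invariants. The only cosmetic difference is that you invoke Bass-order theory for the lattice decomposition where the paper cites the equivalent Borevich--Faddeev classification of torsion-free modules over orders with cyclic index (Theorems \ref{bf1} and \ref{bf2}); your Honda--Tate detour through $E\times E$ is unnecessary but harmless.
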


In the present work we complete the classification of groups of points on abelian threefolds. The following three situations are discussed:
\begin{equation}\begin{array}{llll}
(1)&f_X(t)=P^2(t)Q(t),&\deg P=\deg Q=2,\quad&P(t)Q(t)\mbox{ separable};\\
(2)&f_X(t)=P(t)(t\pm\sqrt{q})^2,&\deg P=4,&P(t)(t\pm\sqrt{q})\mbox{ separable};\\
(3)&f_X(t)=Q^2(t)(t\pm\sqrt{q})^2,\quad&\deg Q=2,&Q(t)(t\pm\sqrt{q})\mbox{ separable}.
\end{array}\label{problem}\end{equation}
The concept  is to include the desired Tate module $\operatorname{T}_l(X)$ in an exact sequence of $\Z_l[T]$--modules
$$0\longrightarrow \operatorname{T}_l(Y)\longrightarrow \operatorname{T}_l(X)\longrightarrow \operatorname{T}_l(Z) \longrightarrow 0,$$
so that both $Y$ and $Z$ have dimension at most $2$. In this situation Smith's invariants of $T$ acting as one minus the Frobenius map on $\operatorname{T}_l(Y)$ and $\operatorname{T}_l(Z)$ are known by Rybakov's theorems. Due to the existence of the exact sequence, the set of Smith's invariants of $\operatorname{T}_l(X)$ taken together with those of  $\operatorname{T}_l(Y)$ and $\operatorname{T}_l(Z)$ form a Liouville-Richardson triple (that is, the set of Smith's invariants of $\operatorname{T}_l(X)$ can be obtained from those of  $\operatorname{T}_l(Y)$ and $\operatorname{T}_l(Z)$ by applying the Liouville-Richardson rule).
Such triples appear in many problems, for instance, if one wants to find a decomposition of a $\mathfrak{gl}_n$--module $V_{\lambda}\otimes V_{\mu}$ into a sum of irreducible modules. Some other applications are listed in the Section \ref{onhorn}. Finally, Liouville-Richardson triples are exactly the ones satisfying certain set of inequalities.

The fact that Smith's invariants of modules in an exact triple form a Liouville-Richardson triple was discovered J.\,A\,Green, T.\,Klein; the proof can be found in \cite{Mac}. In the papers  \cite{th1} and  \cite{sqs} divisibility conditions on the Smith's invariants are obtained from the theorem by Green and Klein. These results are surveyed in  \cite{Fu}, especially Section 2. The idea to apply these results to the problem of finding groups of points on abelian varieties was introduced in \cite{me} but both main theorems of this paper were incorrect.

We describe this set of inequalities in Section \ref{onhorn} following W.\,Fulton's survey~\cite{Fu}. In \ref{sumsof} we recall theorems on eigenavalues of sums of Hermitian matrices by  A.\,Knutson and T.\,Tao in~\cite{KT}. The aim of Section~\ref{horn} is to state the results concerning Smith's invariants using the definitions and theorems of  Section~\ref{sumsof} as well as examples given in Section~\ref{examples}.  In Section  \ref{horn} we also shorten the list of inequalities thus we obtain  Corollary~\ref{cor4} from Theorem \ref{theo4}.

In Section \ref{compu}  we explain thoroughly in what way  Corollary \ref{cor4} is applied to the problem of computation of groups of points on threefolds with Weil polynomials from the list \ref{problem}. In Sections \ref{dif ficult} and  \ref{easy} we conduct the computations and obtain the lists of inequalities. In Section \ref{mainresult} the results are summarized in three theorems concerning each Weil polynomial from the list \ref{problem}.

\section{On Horn conjecture}	 	\label{onhorn}

Relation between sets of Smith's invariants (invariant factors) of a pair of matrices over a (local principal ideal) domain~$R$ and one of their product is described by the conditions appearing also in  Horn conjecture. Interpretation in terms of Hermitian matrices provides some tools for dealing with these conditions which turns up to be helpful when it comes to Smith's invariants (see Example \ref{dual} and Lemma \ref{cor1}). In the first half of this section we cite celebrated results on sums of Hermitian matrices, and  the second half is devoted to Smith's invariants.

\subsection{Sums of Hermitian matrices}\label{sumsof} The first part of this section is dedicated to manipulations with hermitian matrices. We follow the text \cite{Fu} in which the results of \cite{KT} are surveyed.  

Recall that a hermitian operator is diagonalizable, all its  eigenvalues  are real and eigenspaces corresponding to distinct eigenvalues are orthogonal to each other. Suppose $(A, B, C=A+B)$ is a triple of hermitian $n\times n$ matrices acting on a vector space $V$ with eigenvalues $\a=(a_1\ge a_2\ge \ldots\ge a_n)$, $\b=(b_1\ge b_2\ge \ldots\ge b_n)$, $\c=(c_1\ge c_2\ge \ldots\ge c_n)$ respectively. It turns up that there is a list of inequalities of the form
$$\sum_{i\in I}a_i+\sum_{j\in J} b_j\ge \sum_{k\in K} c_k \leqno(*_{IJK}) $$
with $I$, $J$, $K$ certain subsets of $\{1, 2,\ldots , n\}$ giving  necessary and sufficient conditions for a triple of hermitian matrices $A$, $B$, $C=A+B$ with eigenvalues $\a$, $\b$, $\c$ to exist. We shall now describe the triples $(I,J,K)$ such that the corresponding inequalities  $(*_{IJK})$ belong to the list. 

For this purpose we introduce some more notation. A subscript \emph{always} denotes number of elements in the set, whereas superscripts may denote various additional information on elements of the set such as upper or lower bounds. In case an expression is taken in parentheses, the superscript denotes the degree.
$$\begin{array}{l}
M_n=\{1, 2, \ldots, n\};\\
\Lambda_{\:\:p}^ {\le n}=\left\{\,(i_1<i_2<\ldots< i_p)\in (\Z_{\ge 0})^p\quad|\quad 1\le i_1;\quad i_p\le n\,\right\};\\
U_p^n=\Big\{\,(I,J,K)\in \left(\Lambda_{\:\:p}^{\le n}\right)^3\quad|\quad
\sum_{i\in I} i+\sum_{j\in J}j=\sum_{k\in K} k+\frac{p(p+1)}2\,\Big\}.
\end{array}$$
Desired triples are organized in sets $T^n_p\subset U_p^n$. First, we set  
$T^n_1=U_1^n$. 
The sets $T^n_p$ 
are now defined recursively by
$$\begin{array}{r}T^n_p=\Big\{\,(I,J,K)\in U_p^n \quad \vline\quad \forall\;1\le r <p \quad\forall\;(F,G,H)\in T^p_r\\
\sum_{f\in F}i_f+\sum_{g\in G} j_g\ge \sum_{h\in H} k_h +\frac{r(r+1)}2\,\Big\}.\end{array}$$

We also introduce sets 
$$\begin{array}{l}
L_n^{\R}=\{(\lambda_1\ge \lambda_2\ge\ldots\ge\lambda_n)\}\subset (\R)^n;\\
L_{n}^{+}=\{(\lambda_1\ge \lambda_2\ge\ldots\ge\lambda_n)|\lambda_n\ge 0\}\subset (\Z_{\ge 0 })^n.
\end{array}$$

Now we can state three main theorems concerning eigenvalues of sums of Hermitian matrices.

\begin{theorem}[\cite{Fu}, Theorem 1]  \label{theo1}
Let $(\a,\b,\c)$ be a triple in  $(L_n^\R)^3$. There exist a triple $A$,  $B$, $C=A+B$ of hermitian $n\times n$ matrices with eigenvalues $\a$, $\b$, and $\c$ respectively if and only if 
$$\sum_{\a}a+\sum_\b b=\sum_\c c$$
and $(*_{IJK})$ holds for all $(I,J,K)\in T^n_p$ for each $p$. 
\end{theorem}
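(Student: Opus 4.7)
The plan is to split the statement into necessity (every realizable triple satisfies the trace identity together with all $(*_{IJK})$) and sufficiency (every triple satisfying these conditions is realizable). The trace identity is immediate from $\operatorname{tr}(A+B)=\operatorname{tr}(A)+\operatorname{tr}(B)$, so the content lies in the Horn-type inequalities and their converse.

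For necessity I would induct on $n$ and interpret a triple $(I,J,K)\in T^n_p$ through Schubert calculus on the Grassmannian $G=\mathrm{Gr}(p,n)$. The numerical condition $\sum_I i+\sum_J j=\sum_K k+p(p+1)/2$ is exactly the codimension match required for the product $\sigma_I\cdot\sigma_J\cdot\sigma_{K^{\vee}}$ in $H^*(G)$ to possibly land in the top class, and the recursion defining $T^n_p$ corresponds to the Horn recursion characterizing when such a product is in fact nonzero. Assuming non-vanishing of this product, for generic orthonormal eigenbases of $A$, $B$ and $C$ one obtains (via intersection of the three corresponding Schubert cells) a $p$-plane $W\subset V$ compatible with all three flags simultaneously; restricting $C=A+B$ to $W$ and taking traces yields
\[
\sum_{k\in K} c_k \;=\; \operatorname{tr}(C|_W)\;=\;\operatorname{tr}(A|_W)+\operatorname{tr}(B|_W)\;\le\;\sum_{i\in I}a_i+\sum_{j\in J}b_j,
\]
which is precisely $(*_{IJK})$.

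For sufficiency I would follow the Klyachko--Knutson--Tao route. Klyachko, through GIT and moment-map arguments on products of flag varieties, established that $(\a,\b,\c)$ is realizable exactly when the trace identity and all $(*_{IJK})$ hold for triples $(I,J,K)$ indexing a nonzero Littlewood--Richardson coefficient $c^K_{IJ}$. The Knutson--Tao saturation theorem, $c^{N\nu}_{N\lambda,N\mu}\ne 0\Rightarrow c^\nu_{\lambda,\mu}\ne 0$, together with the Horn recursion identifying $T^n_p$ with the set of triples with nonzero LR coefficient, then closes the circle: the conditions $(I,J,K)\in T^n_p$ are exactly what is needed.

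The main obstacle is sufficiency, and within it the saturation step, which is by far the deepest input; Knutson and Tao prove it via the honeycomb/hive combinatorial model on $\mathfrak{gl}_n$-tensor invariants, a genuinely hard combinatorial argument. A secondary obstacle is verifying that the recursively defined $T^n_p$ coincides with the set of triples indexing nonzero Littlewood--Richardson coefficients, which is Belkale's refinement of the Horn recursion. Once these two nontrivial ingredients are in hand, the theorem follows by assembling the necessary and sufficient conditions from the preceding paragraphs.
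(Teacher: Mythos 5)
This theorem is not proved in the paper at all: it is imported verbatim from Fulton's survey \cite{Fu} (Theorem 1 there), which in turn reports the combined work of Klyachko, Knutson--Tao, and Belkale/Horn. So there is no internal proof to compare against; the only question is whether your outline is a faithful account of the external argument the paper is leaning on. It is, in broad strokes: necessity via Schubert calculus (nonvanishing of $\sigma_{\lambda(I)}\cdot\sigma_{\lambda(J)}\cdot\sigma_{\lambda(K)^{\vee}}$ forces a common $p$-plane $W$ and a trace comparison), sufficiency via Klyachko's GIT theorem plus the Knutson--Tao saturation theorem, and the identification of the recursively defined $T^n_p$ with the triples carrying nonzero Littlewood--Richardson coefficients. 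You are also right that the two genuinely deep inputs are saturation and the Horn/Belkale recursion, and that everything else is assembly. Two small points of precision. First, in your displayed chain the first step should be an inequality, not an equality: for $W$ in the appropriate Schubert variety attached to the eigenflag of $C$ one only gets $\sum_{k\in K}c_k\le\operatorname{tr}(C|_W)$, while the opposite Schubert conditions for $A$ and $B$ give $\operatorname{tr}(A|_W)\le\sum_{i\in I}a_i$ and $\operatorname{tr}(B|_W)\le\sum_{j\in J}b_j$; equality throughout is exactly the degenerate case exploited in the paper's Theorem 2 (common invariant subspaces). Second, the genericity claim ("for generic orthonormal eigenbases") needs care, since the eigenflags of $A$, $B$, $C$ are fixed, not generic; the standard fix is that a nonzero product of Schubert classes guarantees a nonempty intersection of the corresponding Schubert \emph{varieties} for \emph{every} choice of three flags, by properness/Kleiman-type arguments. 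With those caveats your sketch is an accurate summary of the proof the paper is citing, at the same level of detail as the survey it points to.
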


\begin{theorem}[\cite{Fu}, Theorem 2] 
Let $A$,  $B$, $C=A+B$  be a triple of hermitian $n\times n$ matrices with eigenvalues $(\a,\b,\c)\in (L_n^\R)^3$ and let
$$\sum_{i\in I} a_i+\sum_{j\in J} b_j=\sum_{k\in K} c_k$$
 for some triple $(I,J,K)\in T^n_p$. Then  there is a $p$-dimensional subspace $W\subset V$ invariant under the action of $A$, $B$ and $C$. 
\end{theorem}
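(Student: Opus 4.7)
The plan is to exhibit a single $p$-dimensional subspace $W\subset V$ that is simultaneously invariant under $A$, $B$, and $C$, by combining the Schubert-calculus interpretation of the Horn triples $T^n_p$ with a trace inequality of Wielandt/Hersch-Zwahlen type. The hypothesis of equality then forces all the inequalities involved to be equalities, and the equality case of Wielandt's inequality forces $W$ to be invariant.

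First I would attach to each of $A$, $B$, $C$ the flag of eigenspaces $E^A_\bullet$, $E^B_\bullet$, $E^C_\bullet$, where $E^A_i$ is the span of the eigenvectors for $a_1,\ldots,a_i$ (and similarly for $B$ and $C$). The Klyachko-Knutson-Tao theorem that underpins Theorem~\ref{theo1} says that $(I,J,K)\in T^n_p$ holds precisely when the associated product of Schubert classes in the cohomology of the Grassmannian $G(p,n)$ is nonzero. This cohomological non-vanishing implies that, for any three flags in $V$, the corresponding Schubert varieties have nonempty intersection; applied to $E^A_\bullet$, $E^B_\bullet$, and the reversed flag of $E^C_\bullet$, this produces a $p$-dimensional $W\subset V$ whose Schubert positions with respect to these three flags are $I$, $J$, and $K$ respectively.

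The next step is the Wielandt/Hersch-Zwahlen trace inequality: for a Hermitian operator $A$ and a $p$-dim subspace $W$ in Schubert position $I$ relative to $E^A_\bullet$, the quantity $\operatorname{tr}(A|_W)$ is bounded in terms of $\sum_{i\in I}a_i$, and equality holds if and only if $W$ decomposes as an orthogonal sum of eigenlines of $A$ indexed by $I$, in which case $W$ is $A$-invariant. Applying this (and its dual form) to $A$, $B$, and $C$ acting on our $W$, and combining with $\operatorname{tr}(C|_W)=\operatorname{tr}(A|_W)+\operatorname{tr}(B|_W)$ arising from $C=A+B$, one recovers exactly the Horn inequality $(*_{IJK})$ asserted in Theorem~\ref{theo1}. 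Under the present hypothesis this chain of inequalities must be a chain of equalities, so each of the three trace inequalities for $A$, $B$, $C$ is an equality. The equality case then yields $W$ invariant under $A$ and under $B$, hence also under $C=A+B$.

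The main obstacle is the first step: extracting a single $W$ in prescribed Schubert position with respect to all three eigenflags simultaneously relies on the full strength of the Klyachko-Knutson-Tao machinery, i.e., the non-vanishing of the Littlewood-Richardson-type Schubert products labelled by $T^n_p$. The remaining arguments---the trace inequality and its equality analysis---are essentially elementary linear algebra, and the assembly of these two ingredients is straightforward bookkeeping.
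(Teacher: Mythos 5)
The paper does not prove this statement; it is quoted verbatim from \cite{Fu} (Theorem 2), and your argument is essentially the proof given in that source: produce a $p$-dimensional $W$ in the intersection of the three Schubert varieties attached to the eigenflags (nonempty because $(I,J,K)\in T^n_p$ forces the relevant product of Schubert classes, equivalently the Littlewood--Richardson coefficient, to be nonzero), bound the traces of the compressions to $W$ by the Hersch--Zwahlen inequalities so that the Horn inequality $(*_{IJK})$ appears as the composite of a chain of trace inequalities, and extract invariance of $W$ from the equality case. This is correct; the only detail to tidy up is the bookkeeping of which of the three eigenflags must be reversed so that the chain reads $\sum_{k\in K}c_k\le\operatorname{tr}(C|_W)=\operatorname{tr}(A|_W)+\operatorname{tr}(B|_W)\le\sum_{i\in I}a_i+\sum_{j\in J}b_j$ rather than the opposite direction, but that is a matter of convention, not a gap.
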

Due to orthogonality of eigenspaces this in fact implies that the matrices $A$, $B$ and $C$ have a couple of complementary common invariant subspaces in $V$.
 
We define a map $\lambda\,:\, \Lambda^{\le n}_{\:\:p}\to L_{p}^+$ as follows: 
$$\lambda(I)=(i_p-p\ge i_{p-1}-p+1\ge \ldots\ge i_1-1).$$ 

\begin{theorem}[\cite{Fu}, Theorem 5] \label{theo3}
Let  $(I, J,K)$ be a triple in $U_p^n$. There exist a triple $A$,  $B$, $C=A+B$ of hermitian matrices with eigenvalues $\lambda(I)$, $\lambda(J)$, and $\lambda(K)$ respectively if and only if  $(I,J,K)\in T^p_n$.
\end{theorem}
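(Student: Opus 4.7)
The plan is to proceed by induction on $p$, matching the recursive nature of the definition of $T^n_p$. The base case $p=1$ is immediate: $T^n_1=U^n_1$, and the $1\times 1$ Hermitian problem $a+b=c$ is solvable iff $c=a+b$; substituting $\lambda(\{i\})=(i-1)$, $\lambda(\{j\})=(j-1)$, $\lambda(\{k\})=(k-1)$ turns this into the $U^n_1$ relation $i+j=k+1$.

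For the inductive step, I would reduce the existence problem for $p\times p$ Hermitian matrices with spectra $\lambda(I),\lambda(J),\lambda(K)$ satisfying $A+B=C$ to Theorem~\ref{theo1}, and then match the resulting Horn-type inequalities against those in the definition of $T^n_p$. First I would verify that the $U^n_p$ identity $\sum_I i+\sum_J j=\sum_K k+p(p+1)/2$ is equivalent to the trace condition $|\lambda(I)|+|\lambda(J)|=|\lambda(K)|$, via the direct identity $|\lambda(I)|=\sum_I i-p(p+1)/2$. Next, using the inverse relation $i_k=\lambda(I)_{p-k+1}+k$, I would rewrite each sum $\sum_{f\in F} i_f$ appearing in the $T^n_p$-inequality as a partition sum $\sum_{b\in F^{\vee}}\lambda(I)_b$ plus a positional sum $\sum_{f\in F} f$, where $F^{\vee}=\{p-f+1:f\in F\}$ is the order-reversing involution on $\{1,\dots,p\}$. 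Invoking the $U^p_r$ identity for $(F,G,H)$ makes the positional sums collapse against the extra constant $r(r+1)/2$, so the $T^n_p$-inequality becomes the Horn inequality $(*_{F^{\vee}G^{\vee}H^{\vee}})$ on the spectra $\lambda(I),\lambda(J),\lambda(K)$.

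By the inductive hypothesis applied at level $r<p$, the set $T^p_r$ coincides with the set of index triples $(F,G,H)\in U^p_r$ for which $r\times r$ Hermitian matrices with spectra $\lambda(F),\lambda(G),\lambda(H)$ and $A+B=C$ exist, i.e., the essential Horn-index set at level $r$. Theorem~\ref{theo1}, applied to $p\times p$ matrices, then says the Hermitian problem for $\lambda(I),\lambda(J),\lambda(K)$ is solvable iff the trace condition holds and the Horn inequality $(*_{F'G'H'})$ holds for every $(F',G',H')\in T^p_r$ and every $r<p$ (the $r=p$ case being redundant by the trace equality). Combining this with the combinatorial translation of the previous paragraph yields the desired equivalence $(I,J,K)\in T^n_p$ iff the Hermitian problem is solvable.

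The main obstacle I foresee is the combinatorial bookkeeping above: the map $\lambda$ reverses the order of the entries, which forces the involution $F\mapsto F^{\vee}$ on indexing sets, and one must check that this involution --- together with the additive constants $r(r+1)/2$ inherent to $U^p_r$ --- sends the family $T^p_r$ to itself, so that the Horn inequalities demanded by Theorem~\ref{theo1} match bijectively with those written into the definition of $T^n_p$. Once this symmetry of $T^p_r$ under reversal is in hand, the induction carries the rest of the argument through routinely.
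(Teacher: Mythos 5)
The paper does not actually prove this statement; it is imported verbatim from Fulton's survey (Theorem 5 there), so there is no internal proof to compare against. Judged on its own terms, your proposal has the right skeleton (induction on $p$, reduction to Theorem~\ref{theo1} for $p\times p$ matrices, translation of index sums into partition sums via $i_f=\lambda(I)_{p-f+1}+f$), and your trace-condition computation $|\lambda(I)|=\sum_I i-p(p+1)/2$ is correct. But the pivotal combinatorial matching is wrong. First note that the displayed recursive definition of $T^n_p$ in this paper has a sign typo: comparing with the paper's own description of $T^n_2$ (and with Fulton), the defining inequalities must read $\sum_{f\in F}i_f+\sum_{g\in G}j_g\le\sum_{h\in H}k_h+r(r+1)/2$. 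With that convention, your substitution turns the defining inequality into $\sum_{F^{\vee}}\lambda(I)_b+\sum_{G^{\vee}}\lambda(J)_b\le\sum_{H^{\vee}}\lambda(K)_b$, which is the \emph{reverse} of a Horn inequality, not $(*_{F^{\vee}G^{\vee}H^{\vee}})$. Moreover, the reversal $F\mapsto F^{\vee}$ does not send $T^p_r$ to itself: it does not even preserve $U^p_r$, since $\sum_{F^{\vee}}f'+\sum_{G^{\vee}}g'-\sum_{H^{\vee}}h'=r(p+1)-r(r+1)/2\ne r(r+1)/2$ for $r<p$ (e.g.\ $(\{1\},\{1\},\{1\})\in T^2_1$ maps to $(\{2\},\{2\},\{2\})\notin U^2_1$). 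So the "symmetry of $T^p_r$ under reversal" that you defer to the end is false, and the induction does not close.

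The missing idea is that the correspondence requires complementation through the trace identity, not just reversal. Subtracting the reversed inequality from $|\lambda(I)|+|\lambda(J)|=|\lambda(K)|$ yields the genuine Horn inequality $\sum_{(F^{\vee})^c}\lambda(I)_b+\sum_{(G^{\vee})^c}\lambda(J)_b\ge\sum_{(H^{\vee})^c}\lambda(K)_b$, indexed by a triple of size $p-r$, and one then has to prove the duality that $(F,G,H)\mapsto\bigl((F^{\vee})^c,(G^{\vee})^c,(H^{\vee})^c\bigr)$ is a bijection $T^p_r\to T^p_{p-r}$ (this is exactly the content of the "complementary inequalities" of Example~\ref{dual}, realized by passing to the matrices $N\cdot\mathrm{Id}-A$, etc.). You can check the mechanism already at $p=2$: the condition $i_1+j_1\le k_1+1$ corresponds, only after using the trace identity, to the Horn inequality $\lambda(I)_1+\lambda(J)_1\ge\lambda(K)_1$. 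Establishing this duality and verifying that it matches the two families of inequalities bijectively is the real content of Horn's observation; it is not routine bookkeeping, and without it the proof is incomplete.
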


Hence if  $(I,J,K)\in T^n_p$ one may consider some triple of hermitian matrices with  eigenvalues $\lambda(I)$, $\lambda(J)$, and $\lambda(K)$. We shall (abusively) write  $A(I)$, $B(J)$, and $C(K)$ meaning that we pick up any triple, although there might exist many of them. For example, consider a triple $(I,J,K)\in T^n_p$ with  $J=(1, 2, \ldots, p)$.  In this case we say that  $B(J)=0$ and $A(I)=C(K)$, even though the latter 
ones are not specified.

\begin{remark}
It turns up that the inequalities $(*_{IJK})$ for all $1\le p\le n-1$ and all $(I,J,K)\in T^n_p$ together with
\begin{equation}\begin{array}{l}\label{up1}
a_1\ge a_2\ge \ldots \ge a_n\\
b_1\ge b_2\ge \ldots \ge b_n\\
c_1\ge c_2\ge \ldots \ge c_n\\
\sum_{a\in\a} a+\sum_{b\in\b}=\sum_{c\in\c}
\end{array}\end{equation}
are not independent for $n\ge 6$. In particular, for $n=6$ the only redundant inequality is
$$a_1+a_3+a_5+b_1+b_3+b_5\ge c_2+c_4+c_5.$$
This inequality follows from \ref{up1}. However, in practice, Lemma \ref{cor3} and Corollary \ref{cor1} leaves this particular inequality out of consideration. See also \cite{Fu}, Example 1. 
\end{remark}

\subsection{Examples}\label{examples} We give some examples of triples in $T^n_p$.

\begin{example}
By definition, $T^n_1$ consists of triples
$$(\{i\}, \{j\},\{k=i+j-1\}).$$
It is not hard to describe explicitly the set $T^n_2$. A triple $(I,J,K)\in U_2^n$ belongs to $T_2^n$ if and only if 
$$\begin{array}{l}
i_1+j_1\le k_1+1,\\
i_2+j_1\le k_2+1,\\
i_1+j_2\le k_2+1.
\end{array}$$
See also enlightening \cite{Ho} theorem $8$ and also theorem $9$ for the explicit description of $T^n_3$.
\end{example}

\begin{example}{\it Complementary ineq\label{dual}ualities.}
One can look for complementary inequalities of the form
$$\sum_{i\in I^c}a_i+\sum_{j\in J^c} b_j\le \sum_{k\in K^c} c_k$$
by subtracting $(*_{IJK})$ from the equality
$$\sum_{\a}a+\sum_{\b} b= \sum_{\c} c.$$
For example, the set $T^n_{n-1}$ consists of triples 
$$(M_n\backslash\{i\}, M_n\backslash\{j\}, M_n\backslash\{i+j-n\})$$
which give the inequalities
$$a_i+b_j\le c_{i+j-n}.$$
Suppose $(I,J,K)\in T^n_p$. One obtains a complementary triple from matrices $n-A(I)$, $n-B(J)$, $n-C(K)$.
\end{example}

\begin{example}\label{lid1}{\it Lidskii inequalities.} Let  $J=\{1,2,\ldots, p\}$. The inequalities 
$i_\alpha+j_1\le k_\alpha+1$ for all $\alpha$ combined with 
$$\sum_{i\in I} i+\sum_{j\in J} j= \sum_{k\in K} k+\frac{p(p+1)}2$$
imply that $I=K$. Thus we have 
$$\sum_{i\in I} a_i+\sum_{j=1}^p b_j=\sum_{i\in I} c_i$$
As discussed above, in this situation $B(J)=0$ and $A(I)=C(K)$.
\end{example}

\begin{example}\label{lid2}
The previous example can be easily improved in a following way. Suppose $J=\{s+1,s+2,\ldots, s+p\}$. Again the inequalities 
$i_\alpha+j_1\le k_\alpha+1$ for all $\alpha$ combined with 
$$\sum_{i\in I} i+\sum_{j\in J} j= \sum_{k\in K} k+\frac{p(p+1)}2$$
imply that $i_\alpha=k_\alpha+s$ for all $\alpha$. In this case $B(J)$ is a scalar matrix.
\end{example}

\subsection{Smith's invariants} \label{horn}
For our purposes we will need the versions of these statements with Smith's invariants. For simplicity let  $R=\Z_l$ and let  $\mathcal{A}\in \operatorname{Mat}_{s\times s}(R)$. Then $R^s/\mathcal{A}R^s$ is a finite abelian $l$-group, hence it has the form 
$$R^s/\mathcal{A}R^s=\Z/l^{a_1}\Z\oplus\Z/l^{a_2}\Z\oplus\ldots\oplus\Z/l^{a_s}\Z$$
with $a_1\ge a_2\ge \ldots \ge a_s\ge 0$. The set $\mathfrak{a}=\{a_1\ge a_2\ge \ldots \ge a_s\}$ is said to be Smith's invariants of the matrix $\mathcal{A}$, as well as of the group $R^s/\mathcal{A}R^s$. 

Suppose there is an exact sequence of $R[T]$--modules
$$0\longrightarrow R^s\longrightarrow R^{s+t}\longrightarrow R^t\longrightarrow 0$$
with the action of $T$ on $R^s$, $R^t$, and $R^{s+t}$ defined by matrices $\mathcal A$, $\mathcal B$, and $\mathcal C$ respectively. In this case $\mathcal{C}$ is of the form
$$\mathcal{C}=\left(\begin{array}{ll}
			\mathcal A&\mathcal X\\
			0&\mathcal{B}
\end{array}\right)$$
for some $\mathcal{X}\in \operatorname{Mat}_{t\times s}(R)$. Moreover,  from

$$
\xymatrix{
	0\ar[r]&R^s\ar[r]\ar[d]_T &R^{s+t}\ar[r]\ar[d]_T &R^t\ar[r]\ar[d]_T\ar[r]&0\\
	0\ar[r]&R^s\ar[r] &R^{s+t}\ar[r] &R^t\ar[r]\ar[r]&0
}
$$
by the snake lemma we obtain  
$$0\longrightarrow \left.R^s_{\phantom{f}}\right/\mathcal{A}R^s\longrightarrow \left.R^{s+t}\right/\mathcal{C}R^{s+t}\longrightarrow \left.R^t\right/\mathcal{B}R^t\longrightarrow 0.$$
We shall now state the theorem which relates invariant factors of certain triples of matrices. 
\begin{theorem}[\cite{Fu}, Theorem 10; \cite{sqs}, Corollary 3.2]  \label{theo4}
Let  $(\mathfrak{a},\mathfrak{b},\mathfrak{c})$ be a triple in $L_s^+\times L_{t}^+\times L_{s+t}^+$. Then there exist such matrices $\mathcal{A}\in \operatorname{Mat}_{s\times s}(R)$, $\mathcal{B}\in \operatorname{Mat}_{t\times t}(R)$, $\mathcal{X}\in \operatorname{Mat}_{t\times s}(R)$ and 
$$\mathcal{C}=\left(\begin{array}{ll}
			\mathcal A&\mathcal X\\
			0&\mathcal{B}
\end{array}\right)$$
such that $\mathfrak{a}$, $\mathfrak{b}$, $\mathfrak{c}$ are Smith's invariants of $\mathcal A$, $\mathcal B$, $\mathcal C$ respectively if and only if
$$\sum_{a\in\mathfrak a} a+\sum_{b\in\mathfrak b} b=\sum_{c\in\mathfrak c}c$$
and for all $p$ and all $(I,J,K)\in T^n_p$
\begin{equation}
\sum_{i\in I\cap M_s} a_i+\sum_{j\in J\cap M_t} b_j\ge\sum_{k\in K}c_k.\label{lab}
\end{equation}
\end{theorem}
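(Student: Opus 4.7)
The plan is to reduce Theorem \ref{theo4} to its Hermitian analog, Theorem \ref{theo1}, by padding with zeros and then invoking the Green--Klein correspondence that matches Smith invariants of snake-lemma triples of $\Z_l[T]$-modules with Littlewood--Richardson coefficients. First I would normalize the lengths: extend $\mathfrak a$ by $t$ trailing zeros and $\mathfrak b$ by $s$ trailing zeros to obtain $\widetilde{\mathfrak a}, \widetilde{\mathfrak b}\in L_{s+t}^+$. Since $\widetilde a_i=0$ for $i>s$ and $\widetilde b_j=0$ for $j>t$, the inequality \eqref{lab} for $(I,J,K)\in T^n_p$ with $n=s+t$ rewrites as
$$\sum_{i\in I}\widetilde a_i+\sum_{j\in J}\widetilde b_j\ge\sum_{k\in K}c_k,$$
while the trace identity becomes $\sum\widetilde{\mathfrak a}+\sum\widetilde{\mathfrak b}=\sum\mathfrak c$. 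Thus the claim reduces to: a triple $(\widetilde{\mathfrak a},\widetilde{\mathfrak b},\mathfrak c)\in(L_{s+t}^+)^3$ is realized as Smith invariants of a snake-lemma triple $(\mathcal A,\mathcal B,\mathcal C)$ if and only if it satisfies the full list of Horn inequalities together with the trace identity.

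For the necessity direction, given matrices with the required block structure, I would induct on $p$ following the recursive definition of $T^n_p$: for each $(I,J,K)\in T^n_p$, construct a $T$-invariant rank-$p$ direct summand $W\subset R^{s+t}$ whose intersection with $R^s$ and image in $R^t$ realize the prescribed index sets $I\cap M_s$ and $J\cap M_t$; the Horn-type bound then emerges by comparing Smith invariants of the restriction $\mathcal A|_W$ and the induced projection, as in the proof of the sublattice version of Theorem~\ref{theo1}. The $l$-adic substitute for the Hermitian orthogonality of eigenspaces is a choice of bases adapted to the Smith normal form, executed along the lines of Thompson's arguments in \cite{th1}, \cite{sqs}.

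For the sufficiency direction, given a triple satisfying the Horn inequalities, the Knutson--Tao saturation theorem yields that the Littlewood--Richardson coefficient $c^{\mathfrak c}_{\widetilde{\mathfrak a},\widetilde{\mathfrak b}}$ is strictly positive. By the classical theorem of Green and Klein (see \cite{Mac}), this coefficient equals the number of isomorphism classes of extensions
$$0\to R^s/\mathcal A R^s\to R^{s+t}/\mathcal C R^{s+t}\to R^t/\mathcal B R^t\to 0$$
of $\Z_l[T]$-modules with the prescribed invariant factors, so at least one such extension exists, and reading off $\mathcal A,\mathcal B$, and the off-diagonal block $\mathcal X$ produces the matrices of the theorem. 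The main obstacle is the necessity step: transporting the subspace construction from the Hermitian setting, which rests essentially on orthogonality of eigenspaces of self-adjoint operators, into the arithmetic setting of $\Z_l$-lattices, where only partially adapted bases are available. Once that sublattice machinery is in place, the remainder is a formal appeal to saturation together with the combinatorial facts recalled in Section~\ref{sumsof}.
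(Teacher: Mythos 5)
The paper does not actually prove Theorem \ref{theo4}: it is quoted from the literature (\cite{Fu}, Theorem 10; \cite{sqs}, Corollary 3.2) and used as a black box, so there is no internal proof to compare against. Your padding of $\mathfrak a$ and $\mathfrak b$ with zeros is correct and explains why the inequalities involve $I\cap M_s$ and $J\cap M_t$, and your sufficiency chain (Horn inequalities and the trace identity $\Rightarrow$ $c^{\mathfrak c}_{\mathfrak a\mathfrak b}\neq 0$ by Klyachko plus Knutson--Tao saturation $\Rightarrow$ existence of a short exact sequence of finite $R$-modules by Green--Klein $\Rightarrow$ a block-triangular presentation, obtained by lifting presentations of the sub and the quotient, which is legitimate because finiteness of the cokernels forces $\det\mathcal A$ and $\det\mathcal B$ to be nonzero) is the standard route. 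One imprecision: $c^{\mathfrak c}_{\mathfrak a\mathfrak b}$ is not the number of isomorphism classes of extensions; the correct statement is that the Hall polynomial counting submodules $N\subset M_{\mathfrak c}$ with $N\cong M_{\mathfrak a}$ and $M_{\mathfrak c}/N\cong M_{\mathfrak b}$ is nonzero if and only if $c^{\mathfrak c}_{\mathfrak a\mathfrak b}\neq 0$ (Klein's theorem, see \cite{Mac}). Since only nonvanishing is used, this does not break the argument.

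The genuine gap is in your necessity direction. You propose to transport the invariant-subspace induction from the Hermitian setting to $\Z_l$-lattices and you yourself flag this as the main obstacle; as written it is not a proof, and carrying it out amounts to redoing Thompson's difficult argument from \cite{th1}. The detour is unnecessary: the same two black boxes give necessity for free. If the block matrices exist, the snake lemma yields the short exact sequence of finite modules, hence the Hall polynomial is nonzero, hence $c^{\mathfrak c}_{\mathfrak a\mathfrak b}\neq 0$ by Klein, and the necessity half of the Horn characterization of nonvanishing Littlewood--Richardson coefficients (the combination of Theorem \ref{theo1} with the equivalence of the Hermitian and tensor-product problems established in \cite{KT} and surveyed in \cite{Fu}) yields every inequality \eqref{lab}. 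Replacing your sublattice construction by this appeal closes the gap and makes both directions formal consequences of the results recalled in Section \ref{sumsof}.
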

\begin{remark}
One can easily extend the above theorem to the case of arbitrary principal ideal domain. 
\end{remark}

The list \ref{lab} of inequalities  to check  contains redundant ones. For example, conditions
$$\sum_{i\in I\cap M_s} a_i+\sum_{j\in J\cap M_t} b_j\ge\sum_{k\in K}c_k$$
for $I=J=K=\{1,2,\ldots, p\}$ for $p>s,t$ follow from
$$\sum_{a\in\mathfrak a} a+\sum_{b\in\mathfrak b} b=\sum_{c\in\mathfrak c}c.$$
The lemmas below imply that list of inequalities  of the form \ref{lab}  can in fact be significantly shortened.

\begin{lemma}\label{cor1}
Suppose $(I, J,K)\in T^n_p$ and there is an $\alpha$ such that $i_{\alpha-1}+1<i_{\alpha}$ (or $\alpha = 1$ and $1<i_1$). Then there exists $\beta$  such that 
\begin{enumerate}
\item[$(1)$]$k_{\beta-1}+1<k_{\beta}$ (or $\beta=1$ and $1<k_\beta$);
\item[$(2)$]$(I', J,K')\in T^n_p$ with $I'=I\backslash \{i_{\alpha}\}\sqcup \{i_{\alpha}-1\}$, and $K'=K\backslash \{k_{\beta}\}\cup \{k_{\beta}-1\}$.
\end{enumerate}
\end{lemma}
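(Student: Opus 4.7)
The plan is to translate the statement into the language of Littlewood--Richardson coefficients and then to apply Pieri's rule. Set $\lambda:=\lambda(I)$, $\mu:=\lambda(J)$, $\nu:=\lambda(K)\in L^+_p$. By Theorem~\ref{theo3} combined with the Knutson--Tao saturation theorem, the assertion $(I,J,K)\in T^n_p$ is equivalent to the non-vanishing of the Littlewood--Richardson coefficient $c^{\nu}_{\lambda\mu}$. Under the identification $\lambda_j=i_{p-j+1}-(p-j+1)$, the hypothesis $i_\alpha>i_{\alpha-1}+1$ (with the convention $i_0=0$, which also covers the case $\alpha=1$) is exactly the condition that $\lambda$ has a \emph{removable corner} in row $k:=p-\alpha+1$, i.e.\ $\lambda_k>\lambda_{k+1}$ (with $\lambda_{p+1}:=0$); the replacement $I\mapsto I'$ then corresponds to $\lambda\mapsto\lambda':=\lambda-e_k$. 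The task reduces to finding a removable corner $k'$ of $\nu$ with $c^{\nu-e_{k'}}_{\lambda'\mu}>0$. Once such a $k'$ is available, setting $\beta:=p-k'+1$ and $K':=K\setminus\{k_\beta\}\cup\{k_\beta-1\}$ will satisfy both (1) and (2).

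The heart of the argument is to compute the $s_\nu$-coefficient of $s_{\lambda'}\cdot s_\mu\cdot s_{(1)}$ in two different ways. Using Pieri on $s_{\lambda'}\cdot s_{(1)}$ first,
\[
s_{\lambda'}\cdot s_{(1)}\;=\;\sum_{k''\in\operatorname{Add}(\lambda')}s_{\lambda'+e_{k''}},
\]
and since $k$ is an addable corner of $\lambda'$ by hypothesis, the summand $k''=k$ is precisely $s_\lambda$. Multiplying by $s_\mu$ and extracting the coefficient of $s_\nu$ yields a value at least $c^{\nu}_{\lambda\mu}>0$. Using Pieri in the other order, one expands $s_{\lambda'}s_\mu=\sum_\eta c^{\eta}_{\lambda'\mu}s_\eta$ and then multiplies by $s_{(1)}$, to see that the same coefficient equals $\sum_{k'\in\operatorname{Rem}(\nu)}c^{\nu-e_{k'}}_{\lambda'\mu}$. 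Comparing,
\[
\sum_{k'\in\operatorname{Rem}(\nu)}c^{\nu-e_{k'}}_{\lambda'\mu}\;\ge\;c^{\nu}_{\lambda\mu}\;>\;0,
\]
so some removable corner $k'$ of $\nu$ produces $c^{\nu-e_{k'}}_{\lambda'\mu}>0$, as required.

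The passage back to the original combinatorial data is immediate: the removable-corner condition $\nu_{k'}>\nu_{k'+1}$ (or $\nu_p>0$ when $k'=p$) rewrites as $k_\beta>k_{\beta-1}+1$ (or $k_1>1$) with $\beta=p-k'+1$, proving (1), and $\nu-e_{k'}=\lambda(K')$ by construction gives (2). The $U^n_p$-equality for $(I',J,K')$ is preserved because $\sum_I i+\sum_J j-\sum_K k$ is unchanged when one entry of $I$ and one entry of $K$ each decrease by~$1$. The one place where an inattentive reader can slip is the correspondence between the two index conventions---decreasing $i_\alpha$ by $1$ removes a corner from $\lambda$ in row $p-\alpha+1$, not in row~$\alpha$---but once that is pinned down, the proof is essentially a one-line Pieri computation and poses no serious obstacle.
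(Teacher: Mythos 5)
Your argument is correct, but it follows a genuinely different route from the paper. The paper proves the lemma by restating it for Hermitian matrices (Lemma \ref{cor1}$'$) and running an induction on $p$: either no Horn inequality $(*_{FGH})$ is tight, in which case one can decrement an eigenvalue of $A$ and of $C$ freely while preserving the hypotheses of Theorem \ref{theo1}, or some inequality is tight, in which case the triple splits over a pair of complementary common invariant subspaces and the induction hypothesis applies to the summand containing $a_\alpha$. You instead pass through Theorem \ref{theo3} and the Knutson--Tao saturation theorem to identify membership in $T^n_p$ with the non-vanishing of a Littlewood--Richardson coefficient $c^{\nu}_{\lambda\mu}$, and then obtain the required removable corner of $\nu$ by computing the $s_\nu$-coefficient of $s_{\lambda'}s_\mu s_{(1)}$ via the Pieri rule in two orders, giving $\sum_{k'}c^{\nu-e_{k'}}_{\lambda'\mu}\ge c^{\nu}_{\lambda\mu}>0$. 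Your index bookkeeping (row $k=p-\alpha+1$, the preservation of the $U^n_p$ trace identity, and the check that $I'$ and $K'$ remain strictly increasing) is accurate. The trade-off: your proof is shorter and avoids induction, but it leans on the saturation theorem to convert the Hermitian-matrix criterion of Theorem \ref{theo3} into an LR-coefficient criterion (a legitimate step, since the paper already cites \cite{KT}, but a heavier external input); the paper's proof stays entirely inside the Hermitian-matrix formalism it has already set up and uses only Theorems \ref{theo1}--\ref{theo3} as quoted.
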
 
First, we give a restatement of the lemma:
\newtheorem*{lemm}{Lemma \ref{cor1}'}

\begin{lemm}Suppose $A$, $B$, $C=A+B$ are $p\times p$ matrices with integer nonnegative eigenvalues and 
$\a$, $\b$, and $\c$ respectively and suppose $a_{\alpha}\ne 0$ for some $\alpha\in M_p$. Then there exist
\begin{enumerate}
\item[$\tilde{(1)}$]  $\beta\in M_p$ such that $c_{\beta}\ne0$
\item[$\tilde{(2)}$] a triple  of matrices $A'$, $B'$, $C'=A'+B'$ with eigenvalues  $\a\backslash \{a_{\alpha}\}\sqcup \{a_{\alpha}-1\}$, $\b$, and $\c\backslash \{c_{\beta}\}\cup \{c_{\beta}-1\}$ respectively.
\end{enumerate}
\end{lemm}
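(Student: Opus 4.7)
My plan is to prove the restatement directly by induction on the matrix size $p$, using Theorem \ref{theo1} to characterize realizable hermitian triples via Horn inequalities, together with the rigidity statement (Theorem 2 of \cite{Fu}) that every tight Horn inequality forces a common invariant subspace. Throughout I normalize $\alpha$ to be the largest index with $a_\alpha$ equal to its common value, so that $a_\alpha>a_{\alpha+1}$ (with the convention $a_{p+1}=0$) and the multiset $\a\backslash\{a_\alpha\}\sqcup\{a_\alpha-1\}$, sorted in weakly decreasing order, is just $\a$ with its $\alpha$-th coordinate decreased by one. The base case $p=1$ is immediate: take $\beta=1$, $A'=(a_1-1)$, $B'=B$, $C'=(c_1-1)$, noting $c_1=a_1+b_1\ge a_\alpha\ge 1>0$.

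For the inductive step I distinguish two cases. In the \emph{reducible} case there exists a proper nontrivial common invariant subspace $W\subset V$ of $A,B,C$; decompose $A=A|_W\oplus A|_{W^\perp}$ and similarly $B$, $C$. The eigenvalue $a_\alpha$ appears in $A|_W$ or in $A|_{W^\perp}$; swapping the roles of $W$ and $W^\perp$ if necessary, I assume it appears in $A|_W$. Apply the inductive hypothesis to $(A|_W,B|_W,C|_W)$ to produce an index $\beta^W$ with $c_{W,\beta^W}\ne 0$ and matrices $A'_W,B'_W,C'_W=A'_W+B'_W$ on $W$ with the appropriately decreased eigenvalues. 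Set $A':=A'_W\oplus A|_{W^\perp}$, $B':=B'_W\oplus B|_{W^\perp}$, $C':=A'+B'=C'_W\oplus C|_{W^\perp}$; their eigenvalues as multisets are exactly $\a'$, $\b$, and $\c'$, where the decreased entry $c_\beta$ of $\c'$ corresponds to the nonzero $c_{W,\beta^W}$.

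In the \emph{irreducible} case, no nontrivial common invariant subspace of $A,B,C$ exists. By the contrapositive of the rigidity theorem, no inequality $(*_{IJK})$ for $(I,J,K)\in T^p_r$ with $0<r<p$ is tight; since the data are integral, this yields the slack bound $s(I,J,K):=\sum_{i\in I}a_i+\sum_{j\in J}b_j-\sum_{k\in K}c_k\ge 1$ for every such triple. Pick $\beta$ to be the largest index with $c_\beta>0$, which exists because $\sum_k c_k=\sum_i a_i+\sum_j b_j\ge a_\alpha\ge 1$; one verifies that this $\beta$ is automatically also the largest index sharing its $c$-value, so the identity $s_{\mathrm{new}}(I,J,K)=s(I,J,K)-[\alpha\in I]+[\beta\in K]$ holds with both sides computed using properly sorted sequences. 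Thus each non-trivial slack is diminished by at most $1$ and remains non-negative, while the trivial trace identity $\sum\a'+\sum\b=\sum\c'$ is preserved. Theorem \ref{theo1} now produces the required hermitian triple $(A',B',C'=A'+B')$.

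The main obstacle lies in the bookkeeping of the reducible case: one must track how the multiplicities of $a_\alpha$ and $c_\beta$ split between $W$ and $W^\perp$, ensuring that after reassembly the multisets of eigenvalues of $A'$ and $C'$ differ from $\a$ and $\c$ only by decreasing one (nonzero) entry each. Since the lemma is stated in terms of multisets rather than ordered sequences, this is a presentational difficulty rather than a substantive mathematical one, and once handled the original combinatorial Lemma \ref{cor1} follows by passing through the bijection $\lambda$ of Theorem \ref{theo3}.
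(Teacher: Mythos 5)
Your proposal is correct and follows essentially the same route as the paper's proof: induction on $p$, splitting into the case where some Horn inequality $(*_{FGH})$ is tight (yielding a pair of complementary common invariant subspaces, to which the inductive hypothesis is applied on the summand containing $a_\alpha$) and the case where none is tight (so that, by integrality, every slack is at least $1$ and the perturbed eigenvalue data still satisfy Theorem \ref{theo1}). Your write-up merely makes explicit some bookkeeping the paper leaves implicit, namely the normalization of $\alpha$ and $\beta$ to preserve sorted order and the slack estimate $s_{\mathrm{new}}\ge s-1\ge 0$.
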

\begin{proof}[Proof of Lemmas \ref{cor1} and \ref{cor1}']
Theorem \ref{theo3} implies that Lemma \ref{cor1} follows from Lemma \ref{cor1}' applied to $(A,B,C)=(A(I),B(J), C(K))$.  Thus it suffices to prove the later one.

Suppose there is no $\beta$  for which $\tilde{( 1)}$ holds. Then $C=0$ and therefore $A=B=0$ which contradicts the definition of $\alpha$.

We prove the lemmas by induction in $p$. If  $p=1$, then the statement holds.

Let matrices $(A,B,C)$ have dimensions $p>1$. Recall that by Theorem \ref{theo1} a triple $(A,B,C)$ satisfies certain list of inequalities $(*_{FGH})$. If there is no inequality $(*_{FGH})$ occurring with equality then for any $\beta$ satisfying $\tilde{(1)}$  the sets $\a\backslash \{a_{\alpha}\}\sqcup \{a_{\alpha}-1\}$, $\b$, and $\c\backslash \{c_{\beta}\}\cup \{c_{\beta}-1\}$ still satisfy conditions of Theorem \ref{theo1}. Therefore there exists a desired triple of matrices $(A',B',C')$.

Otherwise the matrices $A$, $B$, $C$ have a pair of common invariant complementary proper subspaces $W$ and $W^{\bot}$. Let  the set of eigenvalues of $A|_W$  contain $a_{\alpha}$. Then the induction step is applicable to the triple $(A|_W,B|_W,C|_W)$, thus there exists a triple which we denote by $((A|_W)',(B|_W)',(C|_W)')$. Then the desired triple of matrices is  $((A|_W)'\oplus A|_{W^{\bot}},(B|_W)'\oplus B|_{W^{\bot}},(C|_W)'\oplus C|_{W^{\bot}})$.

\end{proof}

\begin{definition}\label{up11}
Suppose $(I,J,K)\in  T^{s+t}_p $ and set
$$\begin{array}{l}\tilde I=I\cap M_s,\\
\tilde J= J\cap M_t.\end{array}$$ 
We say that $(I,J,K)\in  \tilde{T}^{s,t}_p $ if
$$\begin{array}{l}
I\backslash\tilde I=\{s+1,s+2,\ldots s+\alpha\},\\
J\backslash\tilde J=\{t+1,t+2,\ldots t+\beta\}\end{array}$$
for some $\alpha$ and $\beta$.
\end{definition}\begin{corollary} \label{cor3}
Theorem \ref{theo4} is true with $T^{s+t}_p$ replaced by $\tilde{T}^{s,t}_p$. 
\end{corollary}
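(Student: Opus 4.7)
Since $\tilde T^{s,t}_p \subset T^{s+t}_p$ by Definition \ref{up11}, the necessity of the reduced system of inequalities follows at once from Theorem \ref{theo4}. The content of the corollary is the converse: assuming $\sum\mathfrak{a}+\sum\mathfrak{b}=\sum\mathfrak{c}$ and that \ref{lab} holds for every $(I',J',K') \in \tilde T^{s,t}_p$, we must derive \ref{lab} for every $(I,J,K) \in T^{s+t}_p$.

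The plan is to reduce an arbitrary $(I,J,K) \in T^{s+t}_p$ to a triple $(I^*,J^*,K^*) \in \tilde T^{s,t}_p$ by iterated application of Lemma \ref{cor1}, arranging that the inequality \ref{lab} for the reduced triple is at least as strong as the one for the original. Suppose $I \setminus \tilde I$ is not of the form $\{s+1,\ldots,s+\alpha\}$. Then there exists $u \in I$ with $u > s+1$ and $u-1 \notin I$; writing $u = i_\alpha$, the gap condition $i_{\alpha-1}+1 < i_\alpha$ (or $\alpha=1$ with $1 < i_1$) is satisfied. Apply Lemma \ref{cor1} to obtain $(I', J, K') \in T^{s+t}_p$, with $i_\alpha$ replaced by $i_\alpha - 1$ and some $k_\beta$ by $k_\beta - 1$. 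Since $i_\alpha - 1 > s$ we have $I' \cap M_s = I \cap M_s$, so the left-hand side of \ref{lab} is unchanged; since $c_{k_\beta - 1} \ge c_{k_\beta}$, the right-hand side weakly increases. Hence \ref{lab} for $(I', J, K')$ implies \ref{lab} for $(I,J,K)$. The quantity $\sum_{i\in I,\,i>s} i$ strictly decreases at each step, so the iteration terminates with an $I^*$ satisfying $I^* \setminus \widetilde{I^*} = \{s+1,\ldots,s+\alpha\}$.

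Because $T^{s+t}_p$ is symmetric in its first two arguments (its recursive definition treats the entries $i_f$ and $j_g$ symmetrically), the analogous reduction applied to $J$ alone brings $J \setminus \tilde J$ into the form $\{t+1,\ldots,t+\beta\}$ without disturbing the initial segment already achieved in $I$. The final triple $(I^*,J^*,K^*)$ then lies in $\tilde T^{s,t}_p$ and satisfies \ref{lab} by hypothesis, so chaining the implications recovers \ref{lab} for the original $(I,J,K)$.

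The only point that requires genuine care is the selectability of the reduction step: one must show that whenever $I \setminus \tilde I$ is not an initial segment starting at $s+1$, there is an admissible $u \in I$ with $u > s+1$ and $u-1 \notin I$. This strict inequality $u > s+1$ is what guarantees that the replaced element never crosses from the region above $s$ into $M_s$, which would otherwise shift the left-hand side of \ref{lab} in the wrong direction and break the implication between the old and reduced inequalities. Once this combinatorial observation is in place, the rest is bookkeeping.
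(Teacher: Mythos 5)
Your proof is correct and follows exactly the route the paper intends: the paper leaves Corollary \ref{cor3} as an immediate consequence of Lemma \ref{cor1} (marking it with a \qed and no written argument), and your iterated application of that lemma --- shifting an element $i_\alpha>s+1$ with $i_\alpha-1\notin I$ down by one, noting that $I\cap M_s$ is unchanged while $\sum_{k\in K}c_k$ weakly increases, and invoking the symmetry of $T^{s+t}_p$ in its first two arguments to treat $J$ --- is precisely the missing justification. The combinatorial selectability step and the termination argument are both verified correctly.
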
\qed

\begin{lemma}\label{cor2}
Suppose $(I,J,K)\in \tilde{T}^{s,t}_p$ are as in definition \ref{up11}. Then 
$$\#\tilde{I}+\#\tilde J\ge p.$$

Moreover, suppose
$$\#\tilde{I}+\#\tilde J>p.$$
Then there exists $\gamma\notin K$ such that
$$(I\sqcup\{s+\alpha+1\},J\sqcup\{t+\beta+1\},K\sqcup\{\gamma\})\in T^{s+t}_{p+1}.$$
\end{lemma}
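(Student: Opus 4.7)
The plan is to invoke Theorem~\ref{theo3} to realize the triple $(I,J,K)$ by Hermitian $p\times p$ operators $A$, $B$, $C=A+B$ on a space $V$ with eigenvalues $\lambda(I),\lambda(J),\lambda(K)$, and to read off both assertions from the structure of these matrices combined with Weyl-type bounds on sums of Hermitian operators.

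For the first assertion, the block structure $I\setminus\tilde I=\{s+1,\ldots,s+\alpha\}$ forces $\lambda(I)_1=\cdots=\lambda(I)_\alpha=s+\alpha-p$, so $A$ admits an $\alpha$-dimensional subspace $W\subset V$ on which it acts as the scalar $s+\alpha-p$, and analogously $B$ admits a $\beta$-dimensional subspace $U$ on which $B=(t+\beta-p)\cdot\mathrm{id}$. If one had $\alpha+\beta>p=\dim V$, then $W\cap U\neq 0$, and any nonzero $v\in W\cap U$ would be an eigenvector of $C$ with eigenvalue $s+t+\alpha+\beta-2p$; this forces $k_p-p=\lambda(K)_1\geq s+t+\alpha+\beta-2p$, i.e.\ $k_p>s+t$, contradicting $K\subset M_{s+t}$. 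Hence $\#\tilde I+\#\tilde J=2p-\alpha-\beta\geq p$.

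For the second part, assume $\alpha+\beta\leq p-1$. The hypothesis immediately excludes $\alpha=t$ or $\beta=s$ (either would force $\alpha+\beta\geq p$ using $\#\tilde J\leq t$, $\#\tilde I\leq s$), so the elements $s+\alpha+1$ and $t+\beta+1$ lie in $M_{s+t}$. The sum condition in $U^{s+t}_{p+1}$ applied to $I'=I\cup\{s+\alpha+1\}$, $J'=J\cup\{t+\beta+1\}$, $K'=K\cup\{\gamma\}$ then forces the unique candidate $\gamma=s+t+\alpha+\beta-p+1\in M_{s+t}$. To confirm $\gamma\notin K$, I would invoke the Weyl inequality $\lambda(K)_1\leq\lambda(I)_1+\lambda(J)_1$, automatic from $C=A+B$: since $i_p=s+\alpha$ and $j_p=t+\beta$, this reads $k_p\leq s+t+\alpha+\beta-p=\gamma-1$, so in fact $\gamma>k_p$.

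It remains to produce $(p+1)\times(p+1)$ Hermitian matrices with eigenvalues $\lambda(I'),\lambda(J'),\lambda(K')$; Theorem~\ref{theo3} will then yield $(I',J',K')\in T^{s+t}_{p+1}$. The natural candidates on $V'=V\oplus\mathbb{C}$ are $A'=A\oplus(s+\alpha-p)$ and $B'=B\oplus(t+\beta-p)$, giving $C'=C\oplus(\gamma-p-1)$. Since $s+\alpha-p\geq\lambda(I)_1$, the sorted eigenvalue list of $A'$ is $(s+\alpha-p,\lambda(I)_1,\ldots,\lambda(I)_p)$, which coincides with $\lambda(I')$ by a direct calculation from $I'=I\cup\{s+\alpha+1\}$; the same argument handles $B'$. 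The main obstacle is the analogous statement for $C'$: the appended eigenvalue $\gamma-p-1$ must sit at the top of the sorted eigenvalue list of $C'$, otherwise its eigenvalues would interleave with $\lambda(K)$ and no longer equal $\lambda(K\cup\{\gamma\})$ for any admissible $\gamma$. This is exactly what the inequality $\gamma>k_p$ ensures, since $\gamma-p-1\geq k_p-p=\lambda(K)_1$; the sorted eigenvalues of $C'$ are then $(\gamma-p-1,\lambda(K)_1,\ldots,\lambda(K)_p)=\lambda(K')$, completing the plan.
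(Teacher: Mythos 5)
Your proof is correct and follows essentially the same route as the paper: the second assertion uses exactly the paper's construction ($A(I)\oplus(s+\alpha-p)$, $B(J)\oplus(t+\beta-p)$, $C(K)\oplus(s+t+\alpha+\beta-2p)$ fed into Theorem~\ref{theo3}, with the same determination of $\gamma$ and the same bound $k_p\le i_p+j_p-p$), while your eigenspace-intersection argument for the first assertion is just a hands-on derivation of the Weyl-type bound $i_{\#\tilde I+1}+j_{\#\tilde J+1}\le k_{\#\tilde I+\#\tilde J+1}+1$ that the paper reads off directly from the $T^p_1$ condition. Your explicit check that the appended eigenvalues sit at the top of the sorted lists $\lambda(I')$, $\lambda(J')$, $\lambda(K')$ is a detail the paper leaves implicit.
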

\begin{proof}
In case
$$\#\tilde{I}+\#\tilde J< p,$$
the triple $({\{\#\tilde{I}+1\},\{\#\tilde J+1\},\{\#\tilde{I}+\#\tilde J+1\}})$ belongs to $T^p_1$. Therefore,
$$i_{\#\tilde{I}+1}+j_{\#\tilde J+1}\le k_{\#\tilde{I}+\#\tilde J+1}+1$$
should be satisfied. However,
$$ k_{\#\tilde{I}+\#\tilde J+1}+1\le s+t+1<s+t+2=i_{\#\tilde{I}+1}+j_{\#\tilde J+1}.$$

Now suppose
$$\#\tilde{I}+\#\tilde J> p.$$
By the trace equality we obtain $\gamma=s+t+\alpha+\beta-p+1$. Then 
$$\#\tilde{I}+\#\tilde J=p-\alpha+p-\beta>p,$$
implies that $\gamma\le s+t$. Moreover, by Example \ref{dual},
$$k_p\le i_p+j_p-p=s+t+\alpha+\beta-p=\gamma-1,$$
and thus, $\gamma\notin K$.
Finally, matrices $A(I)\oplus(s+\alpha-p)$, $B(J)\oplus(t+\beta-p)$, $C(K)\oplus(s+t+\alpha+\beta-2p)$ satisfy Theorem \ref{theo3}, hence 
$$(I\sqcup\{s+\alpha+1\},J\sqcup\{t+\beta+1\},K\sqcup\{\gamma\})\in T^{s+t}_{p+1}.$$
\end{proof}

\begin{definition}
We say that $(I,J,K)\in  T^{s,t}_p $ if and only if $(I,J,K)\in  \tilde{T}^{s,t}_p $ and 
$$\#\tilde{I}+\#\tilde{J}=p.$$
\end{definition}
\begin{corollary}Theorem \ref{theo4} is tru\label{cor4}e with $T^{s+t}_p$ replaced by $T^{s,t}_p$. 
\end{corollary}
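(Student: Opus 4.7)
The forward (``only if'') direction of Corollary \ref{cor4} is immediate, since $T^{s,t}_p \subseteq \tilde T^{s,t}_p \subseteq T^{s+t}_p$, so the inequalities indexed by $T^{s,t}_p$ form a sublist of those already supplied by Theorem \ref{theo4}. For the converse, my plan, in view of Corollary \ref{cor3}, is to show that every inequality
$$\sum_{i \in \tilde I} a_i + \sum_{j \in \tilde J} b_j \ge \sum_{k \in K} c_k$$
associated to a triple $(I,J,K) \in \tilde T^{s,t}_p$ with strict excess $\#\tilde I + \#\tilde J > p$ is already redundant once one has the inequalities coming from $T^{s,t}_q$ (for some appropriate $q > p$) together with nonnegativity of the Smith invariants $c_k$.

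The mechanism is iterated application of Lemma \ref{cor2}. Starting from a triple with positive excess $\#\tilde I + \#\tilde J - p > 0$, the lemma supplies $\gamma \notin K$ such that
$$(I', J', K') \;=\; (I \sqcup \{s+\alpha+1\},\; J \sqcup \{t+\beta+1\},\; K \sqcup \{\gamma\}) \in T^{s+t}_{p+1}.$$
First I would verify that this augmented triple in fact lies in $\tilde T^{s,t}_{p+1}$: since $s+\alpha+1 > s$ we have $\widetilde{I'} = \tilde I$, and $I' \setminus \widetilde{I'} = \{s+1,\ldots,s+\alpha+1\}$ is still the required consecutive initial segment above $s$; the argument for $J'$ is identical. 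Consequently $\#\widetilde{I'} + \#\widetilde{J'} = \#\tilde I + \#\tilde J$ while the step index grows by $1$, so the excess drops by exactly $1$ per iteration. After finitely many steps one reaches a triple $(I^*, J^*, K^*) \in \tilde T^{s,t}_q$ with $\#\widetilde{I^*} + \#\widetilde{J^*} = q$, meaning $(I^*, J^*, K^*) \in T^{s,t}_q$.

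Because $\widetilde{I^*} = \tilde I$ and $\widetilde{J^*} = \tilde J$ are preserved throughout the iteration, the $T^{s,t}_q$-inequality at the terminal triple reads
$$\sum_{i \in \tilde I} a_i + \sum_{j \in \tilde J} b_j \;\ge\; \sum_{k \in K^*} c_k \;=\; \sum_{k \in K} c_k \;+\; \sum_{k \in K^* \setminus K} c_k \;\ge\; \sum_{k \in K} c_k,$$
where the final step uses $c_k \ge 0$. This recovers the original $\tilde T^{s,t}_p$-inequality, so it is deducible from the shorter list. The only genuinely delicate point in this plan is the bookkeeping step — confirming that the augmented triples remain inside $\tilde T^{s,t}_{p+1}$ and not merely inside $T^{s+t}_{p+1}$, so that Lemma \ref{cor2} can be applied afresh. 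This works automatically because Lemma \ref{cor2} is written precisely so that the extra indices added to $I$ and $J$ are $s+\alpha+1$ and $t+\beta+1$, extending the existing runs $\{s+1,\ldots,s+\alpha\}$ and $\{t+1,\ldots,t+\beta\}$ by one element each, which is exactly what Definition \ref{up11} demands.
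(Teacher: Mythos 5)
Your proof is correct and follows the same route as the paper: reduce to $\tilde T^{s,t}_p$ via Corollary \ref{cor3}, then use Lemma \ref{cor2} to show that any triple with $\#\tilde I+\#\tilde J>p$ indexes a redundant inequality because a larger triple yields a stronger one. The only difference is that you iterate the lemma explicitly until landing in some $T^{s,t}_q$, whereas the paper applies it once (producing a triple in $T^{s+t}_{p+1}$ that may still fail to be in $T^{s,t}_{p+1}$) and leaves the necessary iteration implicit; your check that the augmented triple remains in $\tilde T^{s,t}_{p+1}$ is precisely what makes that iteration legitimate.
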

\begin{proof}
By Corollary \ref{cor3} we replace $T_p^{s+t}$ by $\tilde{T}^{s,t}_p$. We want to show that there is no essential  inequalities lying in $\tilde{T}^{s,t}_p\backslash T^{s,t}_p$. Consider a triple $(I,J,K)\in \tilde{T}^{s,t}_p$ such that
$\#\tilde{I}+\#\tilde{J}>p$.
Then by Theorem \ref{theo4} this triple  corresponds to the inequality
$$
\sum_{\tilde I} a_i+\sum_{\tilde J} b_j\ge\sum_{k\in K}c_k.
$$
This inequality is redundant since by Lemma \ref{cor2} there is a triple $(I\sqcup\{s+\alpha+1\},J\sqcup\{t+\beta+1\},K\sqcup\{\gamma\})\in T^{s+t}_{p+1}$ which corresponds to a stronger inequality
$$
\sum_{\tilde I} a_i+\sum_{\tilde J} b_j\ge\sum_{k\in K}c_k+c_\gamma.
$$

\end{proof}

\section{On groups of points}    	\label{compu}

Recall that we intend to study abelian threefolds with characteristic polynomials of the form
$$\leqno(\ref{problem}) \begin{array}{rlll}
\quad\quad(1)&f_X(t)=P^2(t)Q(t),&\deg P=\deg Q=2,&P(t)Q(t)\mbox{ separable};\\
(2)&f_X(t)=P(t)(t\pm\sqrt{q})^2,&\deg P=4,&P(t)(t\pm\sqrt{q})\mbox{ separable};\\
(3)&f_X(t)=Q^2(t)(t\pm\sqrt{q})^2,&\deg Q=2,&Q(t)(t\pm\sqrt{q})\mbox{ separable}.
\end{array}$$

The following general method is used. Suppose  $f_X(t)$  is decomposable over $\Z$  into a product of coprime factors $f_X(t)=P(t)Q(t)$, the variety $X$ can be included in an exact sequence of abelian varieties (as fppf sheaves)
$$0\longrightarrow Y\longrightarrow X\longrightarrow Z \longrightarrow 0$$
with $Y$ and $Z$ lying in isogeny classes corresponding to polynomials $P$ and $Q$. Thus there is also an exact sequence of Tate modules, and even of $\Z_l\left[T\right]$--modules 
$$0\longrightarrow \operatorname{T}_l(Y)\longrightarrow \operatorname{T}_l(X)\longrightarrow \operatorname{T}_l(Z) \longrightarrow 0,$$
with $T$ acting as $\operatorname{Fr}_X$, $\operatorname{Fr}_Y$, and $\operatorname{Fr}_Z$ on $X$, $Y$, and $Z$ respectively.

Thus if one fixes $Y$ and $Z$ and in particular,  $\Z_l\left[T\right]$--modules $\operatorname{T}_l(Y)$ and $\operatorname{T}_l(Z)$, then  by the results of Section  \ref{horn} Smith's invariants  of the module $\operatorname{T}_l(X)$ should satisfy the conditions of Theorem \ref{theo4}.

Conversely, let $Y$ and $Z$ be abelian varieties with coprime Weil polynomials $P$ and $Q$ and suppose there is an exact sequence of $\Z_l\left[T\right]$--modules 
$$0\longrightarrow \operatorname{T}_l(Y)\longrightarrow\mathcal{T}_l \longrightarrow \operatorname{T}_l(Z) \longrightarrow 0,$$
with $T$ acting as  $1-\operatorname{Fr}_Y$  on $\operatorname{T}_l(Y)$ and as $1-\operatorname{Fr}_Z$ on $ \operatorname{T}_l(Z)$. Since $P$ and $Q$ are coprime, $T$ acts semisimply on $\mathcal T_l$. Therefore one might construct an inclusion $\mathcal T_l\subset \operatorname{T}_l(Y\times Z)$ as a Frobenius invariant submodule.
Then there exists an abelian variety $X$ which is $l$--isogenuous to $(Y\times Z)$  such that $\mathcal T_l\cong \operatorname{T}_l(X)$ as $\Z_l\left[T\right]$--modules (see for example, \cite{Ry10}). Thus given $Y$ and $Z$ (in particular, $\Z_l\left[T\right]$--modules $\operatorname{T}_l(Y)$ and $\operatorname{T}_l(Z)$), and given a module $\mathcal{T}_l$ satisfying conditions of Theorem \ref{theo4} or in fact Corollary \ref{cor4}, there exists an abelian variety $X$ such that $\mathcal T_l\cong \operatorname{T}_l(X)$.

Therefore to list possible exponents of groups of points on abelian threefolds we should consider all possible pairs $Y$ and $Z$ and list varieties $X$ whose Smith's invariants satisfy Corollary \ref{cor4}.

Since  both $Y$ and $Z$ have dimensions at most $2$ we recall part of the classification of Tate modules of abelian varieties of small dimensions is given in \cite{Ry10}, \cite{Ry12}, and \cite{Ry15}.  Consider a variety $Y$ with  $\operatorname{dim} Y\le 2$ and   suppose $f_X(1-t)$ is the characteristic polynomial of the operator $1-Fr_Y$. If $f_X$ is separable, then by Theorems \ref{maintheo}  and \ref{maintheo2} groups of points of abelian varieties isgenuous to $X$ are exactly the groups $G$ whose Hodge polygon $\operatorname{Hp}(H, 2g)$ lies on or below $\operatorname{Np}(f_X(1-t))$.  Otherwise $f_X$ is a square and this case is treated by Theorem~\ref{groupsum}.

First, we intend to give an alternative proof of a stronger version of Theorem \ref{groupsum}.

\begin{theorem}
[\cite{Ry15}]
Let $f(t)$ be a Weil polynomial, $f(1-t)=P^rQ^s$ where $Q$ divides $P$, $\deg P= 2$ and $\deg Q=1$ and $P$ is separable. Then $G$ is the group of points on some abelian variety with Weil polynomial $f$ if and only if \begin{equation}\label{decomp}G\cong \oplus_{j=1}^r G_j\oplus H\end{equation} where $G_j$ are $l$-primary abelian groups such that $\operatorname{Np}_l(P(t))$ lies on or above $\operatorname{Hp}(G_j,2)$ for all $1\leq j\leq r$, and $H\cong(\Z_l/Q(0)\Z_l)^s$.
\end{theorem}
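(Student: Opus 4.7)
The plan is to translate the statement into the theory of $\Z_l[T]$-modules and apply the correspondence between $\Z_l[T]$-modules and abelian varieties recalled in Section~\ref{compu}. Factor $P = QR$ with $R$ the other linear factor of $P$ (coprime to $Q$ since $P$ is separable), and set $\alpha := -Q(0),\ \beta := -R(0) \in \Z$. Because $f$ is a Weil polynomial, $\{1-\alpha,\,1-\beta\} = \{\pm\sqrt q\}$, so $\alpha - \beta = \pm 2\sqrt q$ and $v_l(\alpha - \beta) \leq 1$ for every $l \neq p$, with equality only if $l = 2$.

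For the \emph{sufficiency} direction, given $G = \bigoplus_{j=1}^r G_j \oplus H$ of the stated form, build $\mathcal{T}_l = \bigoplus_j \mathcal{N}_j \oplus \mathcal{M}$ as a direct sum of $\Z_l[T]$-modules. For each $j$, Theorem~\ref{maintheo2} applied to elliptic curves with Weil polynomial $P(1-t)$ produces a rank-$2$ $\Z_l[T]$-module $\mathcal{N}_j$ with $T$-characteristic polynomial $P$ and $\operatorname{coker}(T|_{\mathcal{N}_j}) \cong G_j$. Take $\mathcal{M} \cong \Z_l^s$ with $T$ acting as multiplication by $\alpha$, so that $\operatorname{coker}(T|_{\mathcal{M}}) \cong (\Z_l/Q(0)\Z_l)^s = H$. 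Then $\mathcal{T}_l$ has $T$-characteristic polynomial $P^rQ^s$, and $T$ acts semisimply on $\mathcal{T}_l \otimes \Q_l$; the construction in Section~\ref{compu} produces an abelian variety $X$ with $\operatorname{T}_l(X) \cong \mathcal{T}_l$, hence $X(\F_q)_l = G$.

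For the \emph{necessity} direction, let $M := \operatorname{T}_l(X)$. By Tate's theorem, $T := 1 - \operatorname{Fr}_X$ acts semisimply on $M \otimes \Q_l$, so its minimal polynomial on $M$ divides $P$; hence $M$ is a finitely generated torsion-free module over the order $\mathcal{R} := \Z_l[T]/(P)$, which embeds into $\Z_l \times \Z_l$ via $T \mapsto (\alpha,\beta)$ with conductor of $l$-adic valuation at most $1$. The indecomposable torsion-free $\mathcal{R}$-modules are therefore exhausted by $\mathcal{R}_\alpha := \Z_l$ with $T = \alpha$ (type $(1,0)$), $\mathcal{R}_\beta := \Z_l$ with $T = \beta$ (type $(0,1)$), and $\mathcal{R}$ itself (type $(1,1)$, distinct from $\mathcal{R}_\alpha \oplus \mathcal{R}_\beta$ only when $l = 2$). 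Decomposing $M$ and matching the $\Q_l$-dimensions of the $\alpha$- and $\beta$-eigenspaces of $T$ forces $M \cong \mathcal{R}_\alpha^{s+b} \oplus \mathcal{R}_\beta^b \oplus \mathcal{R}^{r-b}$ for some $0 \le b \le r$. Taking cokernels of $T$, splitting off $s$ copies of $\Z_l/\alpha\Z_l$ to form $H$, pairing the remaining $b$ copies of $\Z_l/\alpha\Z_l$ with the $b$ copies of $\Z_l/\beta\Z_l$ into ``balanced'' summands $G_j = \Z_l/\alpha\Z_l \oplus \Z_l/\beta\Z_l$, and taking the $r-b$ cyclic cokernels $\Z_l/(\alpha\beta)\Z_l$ from the $\mathcal{R}$-summands as the remaining $G_j$'s yields $X(\F_q)_l = H \oplus \bigoplus_{j=1}^r G_j$; both types of $G_j$ satisfy $\operatorname{Np}_l(P)$ on or above $\operatorname{Hp}(G_j, 2)$ by direct verification (the balanced one matches Np exactly, while the cyclic one has $c_2 = 0 \le v_l(\alpha_2)$).

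The main obstacle is the classification of indecomposable torsion-free $\mathcal{R}$-modules when $l = 2$; it is tractable because the conductor has $l$-adic valuation at most $1$, making $\mathcal{R}$ a Bass order for which the above classification is classical.
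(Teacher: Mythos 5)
Your argument is correct in substance and follows the same skeleton as the paper's proof: regard $\operatorname{T}_l(X)$ as a finitely generated torsion-free module over the order generated by $1-\operatorname{Fr}_X$, split it into rank-one pieces (giving $H$) and rank-two pieces (giving the $G_j$), and control the rank-two cokernels by Rybakov's results for a separable quadratic characteristic polynomial. The difference lies in how the splitting is justified. The paper cites the Borevich--Faddeev classification of torsion-free modules over orders with cyclic index (Theorems \ref{bf1} and \ref{bf2}), which applies uniformly whether $\Q_l[T]/(P)$ is a field or $\Q_l\times\Q_l$ and for any conductor. You instead note that the hypotheses force $P$ to split over $\Q$ with roots $1\mp\sqrt q$, so the algebra is always $\Q_l\times\Q_l$ and the conductor of $\Z_l[T]/(P)$ has valuation $v_l(2\sqrt q)\le 1$; the order is then either maximal or the nodal order $\{(x,y):x\equiv y\bmod l\}$, whose indecomposable torsion-free modules $\mathcal R_\alpha$, $\mathcal R_\beta$, $\mathcal R$ can be listed by hand (or by citing that it is a Gorenstein, hence Bass, order). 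This buys a self-contained and more explicit argument: you pin down the rank-two summands and their cokernels ($\Z_l/\alpha\beta\Z_l$ cyclic, or $\Z_l/\alpha\Z_l\oplus\Z_l/\beta\Z_l$), so the Newton/Hodge condition in the necessity direction becomes a two-line computation rather than an appeal to Theorem \ref{maintheo}; the price is that the method is tied to this particular order. Two small cautions. First, no elliptic curve has Weil polynomial $P(1-t)=t^2-q$ (an elliptic curve's Weil polynomial has constant term $+q$), so in the sufficiency direction you must use the lattice-level content of Theorem \ref{maintheo2} --- the existence of a rank-two $\Z_l$-lattice with an operator of separable characteristic polynomial $P$ realizing any prescribed admissible cokernel --- rather than the theorem as literally stated for abelian varieties; the paper's own proof relies on exactly the same reading, so this is a shared imprecision rather than a gap. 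Second, your necessity analysis already shows that the only groups satisfying the Newton/Hodge condition for this $P$ are the two types you construct (since $c_2\le\min(v_l(\alpha),v_l(\beta))\le 1$), so you could close the sufficiency direction entirely within your module classification and avoid Theorem \ref{maintheo2} altogether.
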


\begin{proof}
First,
%
the case $f(t)=(t\pm\sqrt{q})^s$ is trivial.

Otherwise, the statement is an easy consequence of the classification of finite torsionfree modules over orders with cyclic index presented below.  Let $\mathfrak{A}$ be a separable commutative  algebra of finite rank $n$ over~$\mathbb{Q}_l$, suppose $\mathcal O_{\mathfrak{A}}\subset \mathfrak A$ is a maximal order in $\mathfrak{A}$, and suppose $\mathcal{O}\subset\mathcal O_{\mathfrak{A}}\subset \mathfrak{A}$ is an \emph{order with cyclic index} that is $\mathcal O_{\mathfrak{A}}/\mathcal{O}$ is a cyclic group.

By a \emph{lattice} in $\mathfrak A$ we mean an additive subgroup  $M\subset \mathfrak A$ which is a free $\Z_l$-module of rank $n$. For a lattice $M\subset \mathfrak A$ the set \mbox{$\mathcal{O}=\{\xi\in \mathcal A \,|\, \xi M\subset M\}$} is an order and we say that the lattice $M$ \emph{belongs} to the order~$\mathcal O$.

\begin{theorem}
[\cite{BF}]\label{bf1}
Suppose $\mathcal{O}\subset \mathfrak{A}$ is an order with cyclic index in a separable commutative finite  $\Q_l$--algebra $\mathfrak{A}$ and let $M$ be  a finitely generated  torsionfree $\mathcal{O}$--module. Then there exists the chain of ideals of $\mathfrak{A}$ 
$$\mathfrak{A}\supseteq\mathcal{B}_1\supset\mathcal B_2\supset\ldots\supset\mathcal{B}_s,$$
such that in each algebra $\mathcal B_i$ there exists a lattice $B_i$ which belongs to a $\mathcal{B}_i$--order with cyclic index, and there exists an $\mathcal{O}$--decomposition
$$M\cong B_1\oplus B_2\oplus\ldots\oplus B_{s-1}\oplus B_s.$$
\end{theorem}

\begin{theorem}
[\cite{BF}]\label{bf2}
Suppose $\mathcal{O}\subset \mathfrak{A}$ is an order with cyclic index in a separable commutative finite  $\Q_l$--algebra $\mathfrak{A}$ and let $M\subset \mathfrak A^s$ be  a  module of rank $s $ over $\mathcal{O}$. Then there exists the chain of orders of $\mathfrak{A}$ 
$$\mathcal{O}\subseteq\mathcal{O}_1\subset\mathcal O_2\subset\ldots\subset\mathcal{O}_s\subseteq\mathcal O_{\mathfrak{A}}$$
and the decomposition
$$M\cong\mathcal{O}_1\oplus\mathcal O_2\oplus\ldots\oplus\mathcal{O}_{s-1}\oplus \mathcal M_s$$
where $\mathcal M_s\subset \mathfrak A$ is a lattice which belongs to $\mathcal O_s$. Equivalently, there is a decomposition 
$$M\cong M_1\oplus M_2\oplus\ldots\oplus M_s$$
where $M_i\subset \mathfrak A$  are lattices which belong to $\mathcal O_i$. 
\end{theorem}

Recall that the Frobenius morphism acts separably on the Tate  module $\operatorname{T}_l(X)$. Therefore $\operatorname{T}_l(X)$ is a module over an order~$\Z_l[1-\operatorname{Fr}_X]$  in an algebra $\mathbb Q_l[T]/P(T)$, which is separable over $\mathbb Q_l$. The algebra $\mathbb Q_l[T]/P(T)$ is isomorphic either to a quadratic extension $K$ of $\mathbb Q_l$ or to $\mathbb Q_l\times \mathbb Q_l$. In both cases all orders $\mathcal O\subset \mathbb Q_l[T]/P(T)$ are with cyclic index.

Therefore $\operatorname{T}_l(X)$ is a $\Z_l[1-\operatorname{Fr}_X]$--module as in Theorem \ref{bf1}. There is a  decomposition into a sum of two  $\Z_l[1-\operatorname{Fr}_X]$--modules
$$\operatorname{T}_l(X)\cong B_1\oplus B_2,$$
and to each $B_i$  Theorem \ref{bf2} applies. Thus
$$\operatorname{T}_l(X)\cong M_1\oplus M_2\oplus\ldots\oplus M_r\oplus\Z_l^{s}.$$

$$\operatorname{coker}\left( \operatorname{T}_l(X)\xrightarrow{1-\operatorname{Fr}_X}\operatorname{T}_l(X)\right)
\cong\oplus_{i=1}^{r}
\operatorname{coker}\left(  M_i\xrightarrow{1-\operatorname{Fr}_X} M_i\right)
\oplus
\operatorname{coker}\left(\Z_l^s\xrightarrow{1-\operatorname{Fr}_X}\Z_l^s\right).$$

The operator $(1-\operatorname{Fr}_X)$ acts on the modules $M_i$ with characteristic polynomial equal to $P(t)$. Both polynomials are separable,  thus  Theorems \ref{maintheo} and \ref{maintheo2} apply. Therefore a group $G_i$ is equal to 
$$\operatorname{coker}\left(M_i\xrightarrow{1-\operatorname{Fr}_X}M_i\right)$$
for some lattice $M_i$ if and only if their Hodge polygons $\operatorname{Hp}(G_i,2)$ lie on or below Newton polygon $\operatorname{Np}(P(t))$.

The operator $(1-\operatorname{Fr}_X)$ acts on the modules $\Z_l^s$ with characteristic polynomial equal to $Q(t)$, that is $(1-\operatorname{Fr}_X)$ is multiplication by $Q(0)$. Therefore
$$H\cong\operatorname{coker}\left(\Z_l^s\xrightarrow{1-\operatorname{Fr}_X}\Z_l^s\right)\cong(\Z_l/Q(0)\Z_l)^s$$
as desired.

\end{proof}

\subsection{}   \label{dif ficult}
Let $f_X(t)=P^2(t)Q(t)$ with $\deg P=\deg Q=2$, $PQ$ separable. Than there is an exact sequence 
$$0\longrightarrow Y\longrightarrow X\longrightarrow Z \longrightarrow 0$$
where $Y$ and $Z$ are associated with polynomials $P^2$ and $Q$ respectively. Suppose  groups $Y(\F_q)_l$ and $Z(\F_q)_l$ have exponents $\mathfrak{a}=(a_1,a_2,a_3,a_4)$ and $\mathfrak{b}=(b_1,b_2)$ respectively. Suppose $\mathfrak{c}=(c_1,\ldots ,c_6)$ are exponents of $X(\F_q)_l$. Sets $\mathfrak{a}$ and $\mathfrak{b}$ depend on $P$ and $Q$ and satisfy the conditions of Theorems \ref{maintheo} and \ref{groupsum}.  We shall list the inequalities which by Theorem \ref{theo4} relate $\mathfrak c$ to $\mathfrak a$ and $\mathfrak b$.

By  Corollary \ref{cor4} the essential inequalities are encoded in triples $(I,J,K)\in T^{4,2}_p$. There are only $4$ possibilities for $\tilde J=J\cap M_2$, namely $\emptyset$, $\{1\}$, $\{2\}$, $\{1,2\}$. We use complementary inequalities instead of inequalities of size at least $4$.

The inequalities with $p=1$ and $p=5$ are listed separately due to their importance. 
\setcounter{ineqone}{0}

{\it1: } Inequalities of size $1$ and $5$. Those are
\begin{equation}
\begin{array}{rrrcll}
\left[0\right]&(I=\{i\}, J=\{1\})&\quad a_i+b_1&\ge& c_i,&\qquad 1\le i \le 4\label{4}\\
\ione&(I=\{i\}, J=\{2\})&\quad a_i+b_2&\ge& c_{i+1},&\qquad 1\le i \le 4\\
 \ione&(I=\{5\}, J=\{1\})&b_1&\ge&c_5&\\
 \ione&(I=\{5\}, J=\{2\})&b_2&\ge&c_6&\\
 \ione&(I=\{i\}, J=\{3\})&a_i&\ge&c_{i+2}, &\qquad 1\le i \le 4\\
 \ione&(I=M_6\backslash\{6\}, J=M_6\backslash\{1\})&b_1&\le&c_1&\\
 \ione&(I=M_6\backslash\{6\}, J=M_6\backslash\{2\})&b_2&\le&c_2&\\
 \ione&(I=M_6\backslash\{i\},J=M_6\backslash\{6\})&a_i&\le&c_i, &\qquad 1\le i \le 4.
\end{array}
\end{equation}

{\it 2: } The set $\tilde J=\{1,2\}$ corresponds to Lidskii inequalities, see Example \ref{lid1}.  The complementary inequalities  have the form
$$\sum_{i\in I} a_i\le \sum_{i\in I} c_i,$$
and follow from \ref{4}[6].

By Example \ref{lid2}, the inequalities with $\tilde J=\emptyset$ have the form
$$\sum_{i\in I} a_i\ge \sum_{i\in I} c_{i+2},$$
and follow from \ref{4}[3].

{\it3: } Let  $\tilde J=\{2\}$. Then by Example \ref{lid2}, we have $k_\alpha=i_\alpha +1$ for every $\alpha$. The corresponding inequalities are
\begin{equation}\label{5}
\ione\qquad I\subset M_4\qquad \sum_{i\in I}a_i+b_2\ge\sum_{i\in I}c_{i+1}+c_6
\end{equation}

{\it4: } Let  $\tilde J=\{1\}$, that is $J=\{1,3,4,\ldots,p+1\}$.  Then for all $1\le \alpha\le p$ we have
$$\begin{array}{l}
i_\alpha+j_p\ge k_\alpha+p,\\
i_\alpha+j_1\le k_\alpha+1,
\end{array}$$
and therefore $i_\alpha\le k_\alpha\le i_\alpha+1$.
By 
$$\sum_I i+\sum_J j=\sum_K k,$$
there must be only one $\beta$ such that $i_\beta=k_\beta$ and $i_\alpha+1 =k_\alpha$ for $\alpha\ne\beta$. If these conditions are satisfied then we may set 
$$\begin{array}{l}
A(I)=\operatorname{diag}(i_1-1, \ldots i_p-p),\\
B(J)=\operatorname{diag}(1,\ldots,1,0, 1,\ldots,1),\\
C(K)=\operatorname{diag}(k_1-1, \ldots k_p-p),
\end{array}$$ 
with $B(J)_{\beta,\beta}=0$. These matrices satisfy Theorem \ref{theo3}, therefore $(I,J,K)\in T^n_p$. 

Since we only need triples $(I,J,K)\in T^{s,t}_p$, it suffices to consider sets $I$ containing $5$ and not containing $6$. Indeed, $\# I\backslash M_4=p-\# I\cap M_4=\# J\cap M_2=1$. The corresponding  inequalities of size $2$ and $4$ are
\begin{equation}
\begin{array}{rrrcll}
\ione&(I=\{i,5\}, K=\{i,6\})&\quad a_i+b_1&\ge& c_i+c_6,&\qquad 1\le i \le 4\label{6}\\
\ione&(I=\{i,5\}, K=\{i+1,5\})&a_i+b_1&\ge&c_{i+1}+c_5,&\qquad 1\le i \le 3\\\
\ione&(I=M_5\backslash\{i\}, K=M_6\backslash\{1,(i+2)\})&a_i+b_2&\le&c_1+c_{i+2},&\qquad 1\le i \le 4\\
\ione&(I=M_5\backslash\{i\},K=M_6\backslash\{2,(i+1)\})&a_i+b_2&\le&c_2+c_{i+1}, &\qquad 1\le i \le 3.
\end{array}
\end{equation}
Note that  \ref{5}[7] for $\#I=1$  and \ref{6}[8]  imply \ref{4}[0], as predicted by Corollary \ref{cor4}.

The inequalities of size $3$ are
\begin{equation}
\begin{array}{rrrcll}
 \ione&(I=\{1,2,5\}, K=\{1,3,6\})&\quad a_1+a_2+b_1&\ge& c_1+c_3+c_6\label{7}\\
 \ione&(I=\{1,2,5\}, K=\{2,3,5\})&a_1+a_2+b_1&\ge&c_2+c_3+c_5\\
 \ione&(I=\{1,3,5\}, K=\{1,4,6\})&a_1+a_3+b_1&\ge&c_1+c_4+c_6\\\addtocounter{ineqone}{-1}
\ione&(I=\{1,3,5\}, K=\{2,3,6\})&a_1+a_3+b_1&\ge&c_2+c_3+c_6\\
\ione&(I=\{1,3,5\}, K=\{2,4,5\})&a_1+a_3+b_1&\ge&c_2+c_4+c_5\\
\ione&(I=\{1,4,5\}, K=\{1,5,6\})&a_1+a_4+b_1&\ge&c_1+c_5+c_6\\
\ione&(I=\{1,4,5\}, K=\{2,4,6\})&a_1+a_4+b_1&\ge&c_2+c_4+c_6\\
\ione&(I=\{2,3,5\}, K=\{2,4,6\})&a_2+a_3+b_1&\ge&c_2+c_4+c_6\\
\ione&(I=\{2,3,5\}, K=\{3,4,5\})&a_2+a_3+b_1&\ge&c_3+c_4+c_5\\
\ione&(I=\{2,4,5\}, K=\{2,5,6\})&a_2+a_4+b_1&\ge&c_2+c_5+c_6\\
\ione&(I=\{2,4,5\}, K=\{3,4,5\})&a_2+a_4+b_1&\ge&c_3+c_4+c_5\\
\ione&(I=\{3,4,5\}, K=\{3,5,6\})&a_3+a_4+b_1&\ge&c_3+c_5+c_6\\
\end{array}
\end{equation}

Inequalities \ref{7}[14] are equivalent due to Theorem \ref{groupsum}.

\subsection{}    \label{easy}
Let $f_X(t)=P(t)(t\pm\sqrt{q})^2$, $P(t)$ separable and $P(\mp\sqrt{q})\ne 0$ or $P(t)=Q^2(t)$ with $Q(t)$ separable and $Q(\mp\sqrt{q})\ne 0$. Than there is an exact sequence 
$$0\longrightarrow Y\longrightarrow X\longrightarrow Z \longrightarrow 0,$$
where $Y$ and $Z$ are associated with polynomials $P(t)$ and $(t\pm\sqrt{q})^2$ respectively. Note that $1-\operatorname{Fr}_Z$ acts as multiplication by $b=v_l(1\pm q)$, so its Smith's invariants are $\mathfrak{b}=(b,b)$. Suppose $Y(\F_q)_l$  and $X(\F_q)_l$ have exponents $\mathfrak{a}=(a_1,a_2,a_3,a_4)$ and  $\mathfrak{c}=(c_1,\ldots ,c_6)$ respectively. Sets $\mathfrak a$ and $\mathfrak b$ satisfy Theorem \ref{maintheo}. Moreover, in case $P=Q^2$ Theorem \ref{groupsum} is applicable, and therefore $a_1+a_4=a_2+a_3$. 
Again we list inequalities relating  $\mathfrak{c}$ to $\mathfrak{a}$  and~$\mathfrak{b}$.

To list the inequalities we  only consider triples $(I,J,K)\in T^{4,2}_p$ with $\tilde J$ one of $\emptyset$, $\{1\}$, $\{2\}$, $\{1,2\}$. Since $\mathfrak b=(b,b)$, by Lemma \ref{cor1}, we don't need to consider sets $J$ containing $2$ but not $1$. Thus the case $\tilde J=\{2\}$ is now redundant. 

{\it 1: } Inequalities of size $1$ and $5$. Those are

\setcounter{ineqone}{0}

\begin{equation}
\begin{array}{rrrcll}
\ione&(I=\{i\}, J=\{1\})&\quad a_i+b&\ge& c_i,&\qquad 1\le i \le 4\label{1}\\
 \ione&(I=\{5\}, J=\{1\})&b&\ge&c_5&\\
 \ione&(I=\{i\}, J=\{3\})&a_i&\ge&c_{i+2}, &\qquad 1\le i \le 4\\
 \ione&(I=M_6\backslash\{6\}, J=M_6\backslash\{2\})&b&\le&c_2&\\
 \ione&(I=M_6\backslash\{i\},J=M_6\backslash\{6\})&a_i&\le&c_i, &\qquad 1\le i \le 4.
\end{array}
\end{equation}

{\it 2: } For $\tilde J=\{1,2\}$ the complementary inequalities  have the form
$$\sum_{i\in I} a_i\le \sum_{i\in I} c_i,$$
and follow from \ref{1}[4] (see Example \ref{lid1}).

By Example \ref{lid2}, the inequalities with $\tilde J=\emptyset$ have the form
$$\sum_{i\in I} a_i\ge \sum_{i\in I} c_{i+2},$$
and follow from \ref{1}[2].

{\it 3: } The case  $\tilde J=\{1\}$ is treated in \ref{dif ficult} part {\it 4}. 
In this situation $I$ contains $5$ and  does not contain $6$. Moreover,  $i_\beta=k_\beta$ for some $1\le \beta\le 4$ and $i_\alpha+1 =k_\alpha$ for $\alpha\ne\beta$.  The corresponding  inequalities of sizes $2$ and $4$ are
\begin{equation}
\begin{array}{rrrcll}
\ione&(I=\{i,5\}, K=\{i,6\})&\quad a_i+b&\ge& c_i+c_6,&\qquad 1\le i \le 4\label{2}\\
\ione&(I=\{i,5\}, K=\{i+1,5\})&a_i+b&\ge&c_{i+1}+c_5,&\qquad 1\le i \le 3\\\
\ione&(I=M_5\backslash\{i\}, K=M_6\backslash\{1,(i+2)\})&a_i+b&\le&c_1+c_{i+2},&\qquad 1\le i \le 4\\
\ione&(I=M_5\backslash\{i\},K=M_6\backslash\{2,(i+1)\})&a_i+b&\le&c_2+c_{i+1}, &\qquad 1\le i \le 3.
\end{array}
\end{equation}
Note that \ref{2}[5] imply \ref{1}[0].

The inequalities of size $3$ are
\begin{equation}
\begin{array}{rrrcll}
 \ione&(I=\{1,2,5\}, K=\{1,3,6\})&\quad a_1+a_2+b&\ge& c_1+c_3+c_6\label{3}\\
 \ione&(I=\{1,2,5\}, K=\{2,3,5\})&a_1+a_2+b&\ge&c_2+c_3+c_5\\
 \ione&(I=\{1,3,5\}, K=\{1,4,6\})&a_1+a_3+b&\ge&c_1+c_4+c_6\\
\ione&(I=\{1,3,5\}, K=\{2,3,6\})&a_1+a_3+b&\ge&c_2+c_3+c_6\\
\ione&(I=\{1,3,5\}, K=\{2,4,5\})&a_1+a_3+b&\ge&c_2+c_4+c_5\\
\ione&(I=\{1,4,5\}, K=\{1,5,6\})&a_1+a_4+b&\ge&c_1+c_5+c_6\\
\ione&(I=\{1,4,5\}, K=\{2,4,6\})&a_1+a_4+b&\ge&c_2+c_4+c_6\\
\ione&(I=\{2,3,5\}, K=\{2,4,6\})&a_2+a_3+b&\ge&c_2+c_4+c_6\\
\ione&(I=\{2,3,5\}, K=\{3,4,5\})&a_2+a_3+b&\ge&c_3+c_4+c_5\\
\ione&(I=\{2,4,5\}, K=\{2,5,6\})&a_2+a_4+b&\ge&c_2+c_5+c_6\\
\ione&(I=\{2,4,5\}, K=\{3,4,5\})&a_2+a_4+b&\ge&c_3+c_4+c_5\\
\ione&(I=\{3,4,5\}, K=\{3,5,6\})&a_3+a_4+b&\ge&c_3+c_5+c_6\\
\end{array}
\end{equation}

In case $P=Q^2$ inequalities \ref{3}[15] and \ref{3}[16] are identical.

\newpage

\section{Main result}\label{mainresult}
Computation conducted in the previous section combined with Theorems \ref{maintheo} and \ref{groupsum} imply the following three results.			
\begin{theorem}
Suppose $X/\F_q$ is an abelian threefold with characteristic polynomial of Frobenius map equal to $f_X(t)=P^2(t)Q(t)$ with $\deg P=\deg Q=2$, $PQ$ separable, suppose $(m_1\ge m_2)$ and $(n_1\ge n_2)$ are absolute values in $\Z_l$ of zeros of $P(1-t)$ and $Q(1-t)$ respectively. Then there exists an isogenous variety $\tilde X$ with group of points 
$$\tilde X(\F_q)_l=\Z/l^{c_1}\oplus\Z/l^{c_2}\oplus\ldots \oplus\Z/l^{c_6}$$ if and only if there exist sets of numbers $\mathfrak{a}=(a_1,a_2,a_3,a_4)$, $\mathfrak{b}=(b_1,b_2)$ such that
\begin{enumerate}
\item$c_1+\ldots+c_6=2m_1+2m_2+n_1+n_2$
\item $a_2\le a_1\le m_1$\\
$a_1+a_4=a_2+a_3=m_1+m_2$\\
$n_2\le b_2\le b_1\le n_1$\\
$b_1+b_2=n_1+n_2$\setcounter{ineqone}{1}
\item the following inequalities hold$$
\begin{array}{crcll}
\ione      &b_1&\ge&c_5&\\
\ione      &b_2&\ge&c_6&\\
\ione      &a_i&\ge&c_{i+2}, &\qquad 1\le i \le 4\\
\ione      &b_1&\le&c_1&\\
\ione      &b_2&\le&c_2&\\
\ione      &a_i&\le&c_i, &\qquad 1\le i \le 4\\
\ione      &\sum_{i\in I}a_i+b_2&\ge&\sum_{i\in I}c_{i+1}+c_6&\qquad I\subset M_4\\
\ione      & a_i+b_1&\ge& c_i+c_6,&\qquad 1\le i \le 4\\
\ione      &a_i+b_1&\ge&c_{i+1}+c_5,&\qquad 1\le i \le 3\\\
\ione      &a_i+b_2&\le&c_1+c_{i+2},&\qquad 1\le i \le 4\\
\ione      &a_i+b_2&\le&c_2+c_{i+1}, &\qquad 1\le i \le 3\\
\ione      &\quad a_1+a_2+b_1&\ge& c_1+c_3+c_6\\
\ione      &a_1+a_2+b_1&\ge&c_2+c_3+c_5\\
\ione      &a_1+a_3+b_1&\ge&c_2+c_3+c_6\\
\ione      &a_1+a_3+b_1&\ge&c_2+c_4+c_5\\
\ione      &a_1+a_4+b_1&\ge&c_1+c_5+c_6\\
\ione      &a_1+a_4+b_1&\ge&c_2+c_4+c_6\\
\ione      &a_2+a_3+b_1&\ge&c_2+c_4+c_6\\
\ione      &a_2+a_3+b_1&\ge&c_3+c_4+c_5\\
\ione      &a_2+a_4+b_1&\ge&c_2+c_5+c_6\\
\ione      &a_2+a_4+b_1&\ge&c_3+c_4+c_5\\
\ione      &a_3+a_4+b_1&\ge&c_3+c_5+c_6
\end{array}$$
\end{enumerate}
\end{theorem}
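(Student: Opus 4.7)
The strategy is to assemble the theorem from the machinery developed in Sections \ref{horn}--\ref{dif ficult}. Since $P$ and $Q$ are coprime, every $X$ in the given isogeny class sits in an exact sequence
$$0 \longrightarrow Y \longrightarrow X \longrightarrow Z \longrightarrow 0$$
from Section \ref{compu}, with $Y$ in the isogeny class associated to $P^2(t)$ and $Z$ in that of $Q(t)$. Applying $\operatorname{T}_l$ and taking $T=1-\operatorname{Fr}$ yields a short exact sequence of $\Z_l[T]$--modules to which Corollary \ref{cor4} applies. Conversely, as recalled in Section \ref{compu}, because $P$ and $Q$ are coprime, $T$ acts semisimply on any such module $\mathcal{T}_l$, so $\mathcal{T}_l$ embeds as a Frobenius--invariant submodule of $\operatorname{T}_l(Y\times Z)$ and is realized as $\operatorname{T}_l(\tilde X)$ for some $\tilde X$ $l$--isogenous to $Y\times Z$.

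In the forward direction, given $X$, I take $\mathfrak a$ to be the Smith invariants of $Y(\F_q)_l$ and $\mathfrak b$ those of $Z(\F_q)_l$. The constraints on $\mathfrak a$ in item (2) are then exactly the conclusion of Theorem \ref{groupsum} for $Y$, and those on $\mathfrak b$ are Theorem \ref{maintheo} for $Z$. Item (1) is the standard trace identity. The inequalities in item (3) are the output of Corollary \ref{cor4} for the triple $(\mathfrak a,\mathfrak b,\mathfrak c)$, explicitly enumerated in Section \ref{dif ficult}, together with the redundant inequalities removed (for instance \ref{4}[0] is implied by the combination of \ref{5}[7] and \ref{6}[8]).

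In the converse direction, given $(\mathfrak a,\mathfrak b,\mathfrak c)$ satisfying (1)--(3), I first realize $\mathfrak a$ and $\mathfrak b$ as Smith invariants of concrete varieties $Y$ and $Z$ in the prescribed isogeny classes using Theorems \ref{maintheo2} and \ref{groupsum}. Corollary \ref{cor4} then produces a $\Z_l[T]$--module $\mathcal T_l$ fitting into
$$0 \longrightarrow \operatorname{T}_l(Y) \longrightarrow \mathcal T_l \longrightarrow \operatorname{T}_l(Z) \longrightarrow 0$$
with Smith invariants $\mathfrak c$. Applying the construction recalled at the start of Section \ref{compu} yields the desired $\tilde X$ with $\tilde X(\F_q)_l$ having exponents $\mathfrak c$.

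The main obstacle is verifying that the list of inequalities displayed in item (3) is indeed equivalent to the full list produced by Corollary \ref{cor4}, i.e.\ that no relation has been dropped. This is precisely the bookkeeping carried out in Section \ref{dif ficult}, which enumerates triples $(I,J,K)\in T^{4,2}_p$ by the four possible values of $\tilde J = J\cap M_2$. The cases $\tilde J=\emptyset$ and $\tilde J=\{1,2\}$ collapse via Examples \ref{lid1}--\ref{lid2} to the single--index inequalities on lines \ref{4}[3] and \ref{4}[6]; the case $\tilde J=\{2\}$ produces the family \ref{5}[7]; and the case $\tilde J=\{1\}$, treated with the explicit diagonal matrices of Section \ref{dif ficult} part 4, yields \ref{6} and \ref{7}. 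The equivalences within \ref{7}[14] coming from Theorem \ref{groupsum} then consolidate the surviving relations into the exact form stated in (3).
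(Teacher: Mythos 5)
Your proposal is correct and follows essentially the same route as the paper, which derives this theorem in one line from the exact-sequence framework of Section \ref{compu}, the enumeration of $T^{4,2}_p$ triples in Section \ref{dif ficult}, and Theorems \ref{maintheo} and \ref{groupsum} for the constraints on $\mathfrak a$ and $\mathfrak b$. Your write-up is in fact more explicit than the paper's, correctly separating the two directions (Corollary \ref{cor4} applied to the Tate-module sequence for necessity; realization of $\mathcal T_l$ inside $\operatorname{T}_l(Y\times Z)$ for sufficiency) and accounting for the redundancy eliminations exactly as the paper does.
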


\newpage


\setcounter{ineqone}{1}

\begin{theorem}
Suppose $X/\F_q$ is an abelian threefold with characteristic polynomial of Frobenius map equal to $f_X(t)=P(t)(t\pm\sqrt{q})^2$ with $P(t)\cdot(t\pm\sqrt{q})$ separable, suppose $(m_1\ge m_2\ge m_3\ge m_4)$ are absolute values in $\Z_l$ of zeros of $P(1-t)$ and $v_l(1\pm\sqrt{q})=b$. Then there exists an isogenous variety $\tilde X$ with group of points 
$$\tilde X(\F_q)_l=\Z/l^{c_1}\oplus\Z/l^{c_2}\oplus\ldots \oplus\Z/l^{c_6}$$ if and only if there exists a set of numbers $\mathfrak{a}=(a_1,a_2,a_3,a_4)$ such that
\begin{enumerate}
\item $c_1+\ldots+c_6=m_1+\ldots +m_4+2b$
\item $a_1\le m_1$\\
$a_1+a_2\le m_1+m_2$\\
$a_1+a_2+a_3\le m_1+m_2+m_3$\\
$a_1+a_2+a_3+a_4=m_1+m_2+m_3+m_4$ \setcounter{ineqone}{1}
\item the following inequalities hold$$
\begin{array}{crcll}
\ione   &b&\ge&c_5&\\
\ione  &a_i&\ge&c_{i+2}, &\qquad 1\le i \le 4\\
\ione  &b&\le&c_2&\\
\ione  &a_i&\le&c_i, &\qquad 1\le i \le 4\\
\ione  &\quad a_i+b&\ge& c_i+c_6,&\qquad 1\le i \le 4\\
\ione  &a_i+b&\ge&c_{i+1}+c_5,&\qquad 1\le i \le 3\\
\ione  &a_i+b&\le&c_1+c_{i+2},&\qquad 1\le i \le 4\\
\ione  &a_i+b&\le&c_2+c_{i+1}, &\qquad 1\le i \le 3\\
\ione  &\quad a_1+a_2+b&\ge& c_1+c_3+c_6\\
\ione  &a_1+a_2+b&\ge&c_2+c_3+c_5\\
\ione  &a_1+a_3+b&\ge&c_1+c_4+c_6\\
\ione  &a_1+a_3+b&\ge&c_2+c_3+c_6\\
\ione  &a_1+a_3+b&\ge&c_2+c_4+c_5\\
\ione  &a_1+a_4+b&\ge&c_1+c_5+c_6\\
\ione  &a_1+a_4+b&\ge&c_2+c_4+c_6\\
\ione  &a_2+a_3+b&\ge&c_2+c_4+c_6\\
\ione  &a_2+a_3+b&\ge&c_3+c_4+c_5\\
\ione  &a_2+a_4+b&\ge&c_2+c_5+c_6\\
\ione  &a_2+a_4+b&\ge&c_3+c_4+c_5\\
\ione  &a_3+a_4+b&\ge&c_3+c_5+c_6
\end{array}$$
\end{enumerate}
\end{theorem}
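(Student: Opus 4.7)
The plan is to apply the pipeline of Section~\ref{compu}. Since $P(t)\cdot(t\pm\sqrt{q})$ is separable, the factors $P(t)$ and $(t\pm\sqrt{q})^2$ of $f_X(t)$ are coprime, so any $\tilde X$ in the isogeny class of $X$ fits into an exact sequence
\[
0\longrightarrow \tilde Y\longrightarrow \tilde X\longrightarrow \tilde Z\longrightarrow 0
\]
with $\tilde Y$ in the isogeny class attached to $P$ and $\tilde Z$ a supersingular elliptic curve whose Weil polynomial is $(t\pm\sqrt{q})^2$. Passing to Tate modules yields an exact sequence of $\Z_l[T]$--modules with $T=1-\operatorname{Fr}$. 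Because Frobenius acts on $\operatorname{T}_l(\tilde Z)$ as the scalar $\mp\sqrt{q}$, the operator $T$ acts as $1\pm\sqrt{q}\in\Z_l$; hence the Smith invariants of $\operatorname{T}_l(\tilde Z)$ are $\mathfrak{b}=(b,b)$ with $b=v_l(1\pm\sqrt{q})$, independently of the choice of $\tilde Z$.

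For the necessity direction, let $\tilde X$ be given and set $\mathfrak a=(a_1,\dots,a_4)$ to be the Smith invariants of $\operatorname{T}_l(\tilde Y)$. Condition~(2) then follows from Theorem~\ref{maintheo} applied to the separable polynomial $P$; condition~(1) is the trace equality for the exact sequence of Tate modules; and condition~(3) is obtained by feeding the triple $(\mathfrak a,\mathfrak b,\mathfrak c)$ with $s=4$, $t=2$ into Corollary~\ref{cor4}. The resulting list of Horn-type inequalities was already computed in Section~\ref{easy}, so no further calculation is needed here.

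For the sufficiency direction, start from $\mathfrak a,\mathfrak c$ satisfying (1)--(3). Theorem~\ref{maintheo2}, applicable because $P$ is separable, produces some $\tilde Y$ with $\tilde Y(\F_q)_l$ having exponents~$\mathfrak a$, and any $\tilde Z$ in the isogeny class attached to $(t\pm\sqrt{q})^2$ has a Tate module determined up to isomorphism. By Corollary~\ref{cor4}, conditions (1) and~(3) guarantee an extension of $\Z_l[T]$--modules
\[
0\longrightarrow \operatorname{T}_l(\tilde Y)\longrightarrow \mathcal T_l\longrightarrow \operatorname{T}_l(\tilde Z)\longrightarrow 0
\]
whose middle term has Smith invariants $\mathfrak c$. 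Coprimality of $P$ and $(t\pm\sqrt{q})^2$ forces $T$ to act semisimply on $\mathcal T_l$, so $\mathcal T_l$ embeds as a Frobenius-invariant submodule of $\operatorname{T}_l(\tilde Y\times\tilde Z)$; the general procedure recalled in Section~\ref{compu} then produces an abelian variety $\tilde X$ with $\operatorname{T}_l(\tilde X)\cong \mathcal T_l$, so that $\tilde X(\F_q)_l$ has the required exponents $\mathfrak c$.

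The main obstacle is the combinatorial bookkeeping of Section~\ref{easy}. For $\mathfrak b=(b,b)$ one needs to verify that every triple $(I,J,K)\in \tilde T^{4,2}_p$ with $\tilde J=\{2\}$ is redundant---this uses Lemma~\ref{cor1} combined with $b_1=b_2$---, that the inequalities with $\tilde J=\emptyset$ and $\tilde J=\{1,2\}$ collapse via Examples~\ref{lid1}--\ref{lid2}, and that among the $\tilde J=\{1\}$ inequalities those absorbed by the inequalities of size $5$ can be discarded; what remains is exactly the displayed list~(3).
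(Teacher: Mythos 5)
Your proposal is correct and follows essentially the same route as the paper: the general exact-sequence pipeline of Section~\ref{compu} (necessity via Theorem~\ref{maintheo} and Corollary~\ref{cor4}, sufficiency via Theorem~\ref{maintheo2} and the semisimple-embedding construction), with the Horn-type inequalities reduced to the displayed list exactly as in the computation of Section~\ref{easy}. No substantive differences to report.
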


\newpage


\setcounter{ineqone}{1}

\begin{theorem}
Suppose $X/\F_q$ is an abelian threefold with characteristic polynomial of Frobenius map equal to $f_X(t)=P^2(t)(t\pm\sqrt{q})^2$ , $P(t)\cdot(t\pm\sqrt{q})$ separable, suppose $(m_1\ge m_2)$ are absolute values in $\Z_l$ of zeros of $P(1-t)$ and $v_l(1\pm\sqrt{q})=b$. Then there exists an isogenous variety $\tilde X$ with group of points 
$$\tilde X(\F_q)_l=\Z/l^{c_1}\oplus\Z/l^{c_2}\oplus\ldots \oplus\Z/l^{c_6}$$ if and only if there exists a set of numbers $\mathfrak{a}=(a_1,a_2,a_3,a_4)$ such that
\begin{enumerate}
\item$c_1+\ldots+c_6=2m_1+2m_2+2b$
\item $a_2\le a_1\le m_1$\\
$a_1+a_4=a_2+a_3=m_1+m_2$\\
\setcounter{ineqone}{1}
\item the following inequalities hold$$
\begin{array}{rrcll}
 \ione   &b&\ge&c_5&\\
\ione    &a_i&\ge&c_{i+2}, &\qquad 1\le i \le 4\\
\ione    &b&\le&c_2&\\
\ione    &a_i&\le&c_i, &\qquad 1\le i \le 4\\
\ione    &\quad a_i+b&\ge& c_i+c_6,&\qquad 1\le i \le 4\\
\ione    &a_i+b&\ge&c_{i+1}+c_5,&\qquad 1\le i \le 3\\
\ione    &a_i+b&\le&c_1+c_{i+2},&\qquad 1\le i \le 4\\
\ione    &a_i+b&\le&c_2+c_{i+1}, &\qquad 1\le i \le 3\\
\ione    &\quad a_1+a_2+b&\ge& c_1+c_3+c_6\\
\ione    &a_1+a_2+b&\ge&c_2+c_3+c_5\\
\ione    &a_1+a_3+b&\ge&c_2+c_3+c_6\\
\ione    &a_1+a_3+b&\ge&c_2+c_4+c_5\\
\ione    &a_1+a_4+b&\ge&c_1+c_5+c_6\\
\ione    &a_1+a_4+b&\ge&c_2+c_4+c_6\\ 
\ione ,[16]   &a_2+a_3+b&\ge&c_2+c_4+c_6\\ \addtocounter{ineqone}{1}
\ione    &a_2+a_3+b&\ge&c_3+c_4+c_5\\
\ione    &a_2+a_4+b&\ge&c_2+c_5+c_6\\
\ione    &a_2+a_4+b&\ge&c_3+c_4+c_5\\
\ione    &a_3+a_4+b&\ge&c_3+c_5+c_6
\end{array}$$
\end{enumerate}
\end{theorem}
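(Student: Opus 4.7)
The plan is to apply the general reduction of Section~\ref{compu}. Since $P(t)\cdot(t\pm\sqrt q)$ is separable, the factors $P^2(t)$ and $(t\pm\sqrt q)^2$ of $f_X$ are coprime in $\Z[t]$, so $X$ fits into an exact sequence
$$0\longrightarrow Y\longrightarrow X\longrightarrow Z\longrightarrow 0$$
with $Y$ in the isogeny class of $P^2(t)$ and $Z$ in the isogeny class of $(t\pm\sqrt q)^2$. Passing to Tate modules gives a short exact sequence of $\Z_l[T]$--modules with $T$ acting as $1-\operatorname{Fr}$, and by the converse half of Section~\ref{compu} every such extension of $\operatorname{T}_l(Z)$ by $\operatorname{T}_l(Y)$ is realized by a threefold $\tilde X$ isogenous to $Y\times Z$. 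The problem therefore reduces to describing the possible Smith invariants $\mathfrak c=(c_1,\dots,c_6)$ of the middle term of such an extension as $\operatorname{T}_l(Y)$ and $\operatorname{T}_l(Z)$ vary.

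The first step is to pin down the Smith invariants of the outer terms. For $Z$ the operator $1-\operatorname{Fr}_Z$ acts as the scalar $1\pm\sqrt q\in\Z_l$, so its invariants are $\mathfrak b=(b,b)$ with $b=v_l(1\pm\sqrt q)$. For $Y$ the characteristic polynomial $f_Y(1-t)=P^2(1-t)$ satisfies the hypothesis of Theorem~\ref{groupsum} (since $P(1-t)$ is separable of degree two), which gives that the admissible $\mathfrak a=(a_1,a_2,a_3,a_4)$ are exactly those with $a_1\le m_1$ and $a_1+a_4=a_2+a_3=m_1+m_2$; these are the conditions collected in item~(2) of the statement.

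Next I would invoke Corollary~\ref{cor4} to rephrase the existence of an extension with prescribed invariants $\mathfrak c$ as the trace equality $\sum c_i=2m_1+2m_2+2b$ (item~(1)) together with the inequalities indexed by triples $(I,J,K)\in T^{4,2}_p$. This enumeration has already been carried out in Section~\ref{easy} for exactly this data; the identity $\mathfrak b=(b,b)$ lets Lemma~\ref{cor1} kill every triple with $2\in J$, $1\notin J$, so only $\tilde J\in\{\emptyset,\{1\},\{1,2\}\}$ contribute, and after pruning via \ref{1}[2] and \ref{1}[4] the surviving inequalities are precisely the lists \ref{1}, \ref{2}, \ref{3}. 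Transcribing them yields item~(3).

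The only point needing extra care in the present case is the additional symmetry $a_1+a_4=a_2+a_3$ coming from $P=Q^2$: it forces the two inequalities \ref{3}[15] and \ref{3}[16] (both with right-hand side $c_2+c_4+c_6$) to coincide, which explains the single merged entry indexed ``[16]'' in the statement. I expect no substantial obstacle beyond this bookkeeping, since the underlying machinery---Theorem~\ref{theo4} with its pruning Corollary~\ref{cor4}, and the classifications Theorems~\ref{maintheo}, \ref{maintheo2}, \ref{groupsum}---has been assembled in the earlier sections, and the present theorem is essentially their specialization to the Weil polynomial $P^2(t)(t\pm\sqrt q)^2$.
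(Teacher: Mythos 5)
Your proposal is correct and follows essentially the same route as the paper: the paper's "proof" of this theorem is exactly the reduction of Section \ref{compu} (exact sequence with $Y$, $Z$ corresponding to $P^2(t)$ and $(t\pm\sqrt q)^2$, so $\mathfrak b=(b,b)$ and $\mathfrak a$ constrained by Theorem \ref{groupsum}) combined with the enumeration of $T^{4,2}_p$-inequalities already carried out in Section \ref{easy}, including the observation that $a_1+a_4=a_2+a_3$ merges \ref{3}[15] and \ref{3}[16]. The only caveat is one of transcription rather than substance: the theorem's item (3) also silently drops the inequality $a_1+a_3+b\ge c_1+c_4+c_6$ from list \ref{3}, a reduction that neither your write-up nor the paper explicitly justifies.
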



\begin{bibdiv}
\begin{biblist}

\bib{BF}{article}{
author={Borevich,Zu Iu},author={Faddeev, Du Ku},
title={Representations of orders with a cyclic index},
journal= {Trudy Mat.\,Inst.\,Steklou.}, volume={80}, date={1965}, pages={51--65},language=
{Russian},
translation={
journal={Proc.\,Steklou
Inst.\,Math.\,80 },
date={1965} ,
pages={56--72} 
}
}
\bib{Fu}{article}{
 author={Fulton, Wil},
 title={Eigenvalues, invariant factors, highest weights, and Schubert calculus}, journal={Bull. Amer. Math. Soc.},
 volume={37},
 date={2000},
 pages={209--249},
 eprint={https://arxiv.org/pdf/math/9908012.pdf}
 }
 \bib{Ho}{article}{
author={Horn, Alfred}, title={Eigenvalues of sums of Hermitian matrices}, journal={Pacic J. Math.}, volume={12}, date={1962}, pages={225--241}
}
\bib{KT}{article}{
author={Knutson, Alen}, author={Tao, Terrence}, title={The honeycomb model of $GL_n(\mathbb{C})$  tensor products I: proof of the
 saturation conjecture}, journal={J. Amer. Math. Soc.}, volume={ 12},  date={1999}, pages={1055--1090}
}
\bib{Mac}{book}{title =     {Symmetric Functions and Hall Polynomials},
   author =    {Macdonald,Ian Grant},
   publisher = {Oxford University Press, USA},
   isbn =      {0198534892,9780198534891,0198504500},
   year =      {1995},
   series =    {Oxford Mathematical Monographs},
   edition =   {2},
   volume =    {}}
\bib{me}{article}{
author={Meisner, Patrick}, title= { Groups structure of abelian varieties},     eprint={https://arxiv.org/pdf/1503.04326v1.pdf}, date={2015}}
\bib{Ry10}{article}{ 
author={Rybakov, Sergey},title={ The groups of points on abelian varieties over finite fields},journal={ Cent. Eur. J. Math.},volume={ 8(2)}, date={2010},
pages={282--288},eprint={https://arxiv.org/pdf/0903.0106.pdf}}
\bib{Ry12}{article}{
author={Rybakov, Sergey}, title={The groups of points on abelian surfaces over finite fields,}, journal={Arithmetic, Geometry, Cryptography
and Coding Theory, Cont. Math.}, volume={ 574}, publisher={Amer. Math. Soc., Providence, RI}, date={2012}, pages={151--158},
eprint={https://arxiv.org/pdf/1007.0115.pdf}}
\bib{Ry15}{article}{
author={Rybakov, Sergey}, 
title={On classification of groups of points on abelian varieties over finite fields},  eprint={https://arxiv.org/pdf/1401.1652.pdf}, date={2015}} 
\bib{sqs}{article}{
 label={SQS99},
 author={Santana,Ana Paula}, author={Queir\'o,Joao Filipe},author={Marques de S\'a,Eduardo}, title={Group representations and matrix
 spectral problems},  journal={Linear and Multilinear Algebra},volume={46}, pages={1--23},date={1999}}
 \bib{th1}{article}{author={Thompson,Robert}, title={ Smith invariants of a product of integral matrices},journal={ Contemp.\,Math},volume={47},date={1985},pages={ 401--435}}
 
 \bib{tsf}{article}{author={Tsfasman, Michail},title={ The group of points of an elliptic curve over a finite field}, journal={Theory of numbers and its applications},
 publisher={Tbilisi}, date={1985}, pages={286--287}
}
 \bib{xi1}{article}{author={Xing, Ch.}, title={The structure of the rational point groups of simple abelian varieties of dimension two over finite
 fields}, journal={Arch. Math.}, date={1994}, volume={63}, pages={427–-430}}
 \bib{xi2}{article}{author={Xing, Ch.},title={On supersingular abelian varieties of dimension two over finite fields}, journal={Finite Fields Appl.}, date={1996}, volume={2}, number={4}, pages={407–-421}}

\end{biblist}
\end{bibdiv}


\end{document}